\documentclass[11pt]{article}
\usepackage{amsmath}
\usepackage{dsfont}
\usepackage{amsmath}
\usepackage{amsmath,bm}
\usepackage{mathrsfs}
\usepackage{amsmath,amssymb}
\usepackage{amsfonts}
\usepackage{hyperref}
\usepackage{amsthm}
\usepackage{graphicx}
\usepackage{subfigure}
\usepackage{xcolor}
\usepackage{appendix}
\usepackage{cite}

\usepackage{tikz}

\usetikzlibrary{calc,shapes,arrows}

\usepackage{pgfplots}

\usetikzlibrary{patterns}

\usepgfplotslibrary{dateplot}

\renewcommand{\qed}{\hfill\small{$\square$}\normalsize}

\hfuzz=\maxdimen
\tolerance=10000
\hbadness=10000

\theoremstyle{definition}
\newtheorem{lemma}{Lemma}[section]
\newtheorem{definition}{Definition}

\newtheorem{theorem}[lemma]{Theorem}
\newtheorem{corollary}[lemma]{Corollary}

\newtheorem{remark}{Remark}

\numberwithin{equation}{section}

\renewcommand{\qed}{\hfill\small{$\square$}\normalsize}

\DeclareFixedFont{\Acknowledgment}{OT1}{cmr}{bx}{n}{14pt}
\textwidth 150mm \textheight 200mm \hoffset -1.2cm \voffset -0.5cm
\linespread{1.1}

\begin{document}

\title{\bf Combinatorial Calabi flows on surfaces with boundary}
\author{Yanwen Luo, Xu Xu}
\maketitle

\begin{abstract}
Motivated by Luo's combinatorial Yamabe flow on closed surfaces \cite{L1} and
Guo's combinatorial Yamabe flow on surfaces with boundary \cite{Guo},
we introduce combinatorial Calabi flow on ideally triangulated surfaces with boundary, aiming at finding
hyperbolic metrics on surfaces with totally geodesic boundaries of given lengths.
Then we prove the long time existence and global convergence of combinatorial Calabi flow on surfaces with boundary.
We further introduce fractional combinatorial Calabi flow on surfaces with boundary,
which unifies and generalizes the combinatorial Yamabe flow and the combinatorial Calabi flow on surfaces with boundary.
The long time existence and global convergence of fractional combinatorial Calabi flow are also proved.
These combinatorial curvature flows provide effective algorithms to
construct hyperbolic surfaces with totally geodesic boundaries with prescribed lengths.
\end{abstract}

\textbf{MSC (2020):}
52C26

\textbf{Keywords: }  Vertex scaling; Surfaces with boundary; Combinatorial Yamabe flow;
Combinatorial Calabi flow; Fractional combinatorial Calabi flow

\maketitle

\section{Introduction}

The resolution of the Poincar\'{e} Conjecture by Ricci flow marks the culmination of the research of curvature flows on manifolds. It led to a flurry of study on various types of geometric flows to deform Riemannian metrics on a manifold.
The purpose of this paper is to extend this study to surfaces with boundary in the combinatorial setting.
We introduce \textit{combinatorial Calabi flows} on hyperbolic surfaces with boundary equipped with ideal triangulations,
which deform a hyperbolic metric on a surface with boundary
within the discrete conformal class of the metric.
We prove that the combinatorial Calabi flows converge to hyperbolic surfaces with totally geodesic boundary, whose lengths can be arbitrarily prescribed.

The notion of discrete conformal structures on triangulated surfaces is a discrete analogue of their smooth counterparts, conformal structures on smooth surfaces. One motivation to study discrete conformal structures is to compute the conformal maps between planar regions in applications.  Thurston \cite{T1} rediscovered the circle packing theorem initiated by Koebe \cite{K1} and Andreev \cite{Andr1,Andr2},
and proposed circle packings as a natural discretization of conformal maps. This idea was carried out by Rodin-Sullivan \cite{RS}. Since then, different types of discrete conformal structures on closed surfaces have been extensively studied in the last two decades, including  tangential circle packings, Thurston's circle packings, inversive distance circle packings, Luo's vertex scaling, mixed types of discrete conformal structures, etc.
See, for instance, \cite{BS, CL, G3, GT, GL,L1, L3,  X1, X2, ZGZLYG} and others.
To find circle packings with prescribed combinatorial curvatures on closed triangulated surfaces,
Chow-Luo \cite{CL} introduced the combinatorial Ricci flow for Thurston's circle packing metrics on closed surfaces.
Luo \cite{L1} further introduced a new type of discrete conformal structure called vertex scaling for piecewise flat metrics on closed surfaces,
and the corresponding combinatorial Yamabe flow deforming discrete metrics within a discrete conformal class.
This new notion of discrete conformal structure leads to rigidity results of polyhedral surfaces with respect to discrete curvature \cite{BPS} and discrete uniformization theorems on closed polyhedral surfaces \cite{GGLSW, GLSW, Sp}.
These works give rise to tons of new results on combinatorial curvature flows and discrete conformal structures on closed surfaces \cite{GLW, LSW, WZ, WGS, W}, and lead to various applications in surface matching, surface parametrization, manifold spline and others.
See \cite{SWGL, ZG} for a comprehensive survey on this topic.

In contrast to the case of closed surfaces, there are only sporadic results on discrete conformal structures and discrete curvature flows on surfaces with boundary.
The first notion of discrete conformal structure, also called vertex scaling,
was introduced by Guo in \cite{Guo} for hyperbolic metrics on ideally triangulated surfaces with boundary.
Guo \cite{Guo} further introduced the corresponding combinatorial Yamabe flow and studied its properties. See also \cite{X3}.
In this paper, we introduce combinatorial Calabi flows for Guo's vertex scaling and study their long time behavior.

\subsection{Set up}
Suppose $\Sigma$ is a compact surface with boundary $B$ consisting of $n$ connected components, which are topologically circles.
An ideal triangulation $\mathcal{T}$ of $\Sigma$ could be constructed using topological hexagons as follows.
Suppose we have a disjoint union of finite colored hexagons such that three non-adjacent edges of each hexagon are colored red and
the other three edges colored black.
Then we identify the red edges of the colored hexagons in pairs by homeomorphisms.
The quotient space of these colored hexagons is an ideally triangulated compact surface with boundary.
For simplicity, we define a face in the ideal triangulation $\mathcal{T}$ as the image of one hexagon under the quotient map.
Similarly, an edge in $\mathcal{T}$ is the image of one red edge in the hexagons under the quotient map, and a boundary component is the quotient of a sequence of black edges glued together at their endpoints.
We will denote the connected components of the boundary $B$ as $\{1,2, \cdots, n\}$, and the set of edges and faces as $E$ and $F$ respectively.

A well-known fact in hyperbolic geometry is that for any three positive numbers, there exists a unique hyperbolic righted-angled hexagon up to isometry, where the lengths of three non-adjacent edges are given by the three positive numbers \cite{Rat}.
If each edge of an ideal triangulation $\mathcal{T}$ of a compact surface with boundary $\Sigma$ is assigned with a positive number, then each face in $\mathcal{T}$ can be realized as a hyperbolic right-angled hexagon.
Gluing these geometric hexagons along the edges in pairs by hyperbolic isometries, we can construct
a hyperbolic surface with totally geodesic boundary $\Sigma$ from the ideal triangulation $\mathcal{T}$. Conversely, any ideally triangulated hyperbolic surface with totally geodesic boundary $(\Sigma, \mathcal{T})$ produces a function $l: E\rightarrow (0, +\infty)$ sending each edge to the length of the unique geodesic orthogonal to boundary geodesics in its homotopy class.
The function $l: E\rightarrow (0, +\infty)$ is called a \emph{discrete hyperbolic metric} on $(\Sigma, \mathcal{T})$. Moreover, the lengths of its boundary components define a function $K: B\to (0 + \infty)$, called the \emph{generalized combinatorial curvature}.

Motivated by Luo's definition of vertex scaling of piecewise flat metrics on triangulated closed surfaces \cite{L1},
Guo \cite{Guo} introduced the following definition of discrete conformality for
discrete hyperbolic metrics on ideally triangulated surfaces with boundary, called vertex scaling as well.

\begin{definition}[Guo \cite{Guo}]\label{defn Guo's vertex scaling}
Suppose $(\Sigma, \mathcal{T})$ is an ideally triangulated surface with boundary.
Let $l$ and $\tilde{l}$ be two discrete hyperbolic metrics on $(\Sigma, \mathcal{T})$.
If there exists a function $w: B\rightarrow \mathbb{R}$ such that
\begin{equation}\label{Guo's vertex scaling}
  \cosh \frac{l_{ij}}{2}=e^{w_i+w_j}\cosh \frac{\tilde{l}_{ij}}{2},
\end{equation}
then the discrete hyperbolic metric $l$ is called vertex scaling of $\tilde{l}$. The function $w: B\rightarrow \mathbb{R}$ is called  a discrete conformal factor.
\end{definition}

For a fixed discrete hyperbolic metric $\tilde{l}$, a discrete conformal factor $w: B\rightarrow \mathbb{R}$ is \textit{admissible} if (\ref{Guo's vertex scaling}) defines a discrete hyperbolic metric $l: E\rightarrow (0, +\infty)$.
The space of admissible discrete conformal factors of a discrete hyperbolic metric $\tilde{l}$ is denoted by $\mathcal{W} = \mathcal{W}(\tilde{l})$.
The discrete hyperbolic metric $l$ in  (\ref{Guo's vertex scaling}) will be referred as $w*\tilde{l}$ for simplicity.

Guo \cite{Guo} further introduced the following combinatorial Yamabe flow
\begin{equation}\label{Guo's CYF}
  \begin{aligned}
  \left\{
  \begin{array}{ll}
    \frac{dw_i(t)}{dt}=K_i(t), & \hbox{ } \\
    w_i(0)=0, & \hbox{ }
  \end{array}
\right.
  \end{aligned}
\end{equation}
and proved its long time existence and global convergence to a cusped hyperbolic surface.
Motivated by Guo's combinatorial Yamabe flow (\ref{Guo's CYF}),
the second author \cite{X3} introduced the following combinatorial Yamabe flow
\begin{equation}\label{Xu's CYF}
\begin{aligned}
\left\{
  \begin{array}{ll}
    \frac{dw_i(t)}{dt}=K_i(t)-\overline{K}_i,  &\hbox{ } \\
    w(0)=w_0,  &\hbox{ }
  \end{array}
\right.
\end{aligned}
\end{equation}
where $\overline{K}: B\rightarrow (0, +\infty)$ is a function defined on the boundary components and $w_0\in \mathcal{W}$.
The combinatorial Yamabe flow (\ref{Xu's CYF}) aims at finding hyperbolic metrics on $\Sigma$ with a prescribed generalized combinatorial curvature $\overline{K}: B\rightarrow (0, +\infty)$, equivalently, prescribed lengths of connected components of $B$.
The long time existence and global convergence of the combinatorial Yamabe flow (\ref{Xu's CYF}) were proved in \cite{X3}.

\subsection{Main Results}
In this paper, we introduce the following combinatorial Calabi flow for Guo's vertex scaling of
discrete hyperbolic metrics on
ideally triangulated surfaces with boundary.
\begin{definition}
Assume $(\Sigma, \mathcal{T})$ is a surface with boundary with an ideal triangulation.
Let $\tilde{l}\in \mathbb{R}^E_{>0}$ be a discrete hyperbolic metric defined on $(\Sigma, \mathcal{T})$ and
$\bar{K}\in \mathbb{R}^n_{>0}$ be a function defined on the boundary components $B=\{1,2,\cdots, n\}$.
The \textit{combinatorial Calabi flow} is defined to be
\begin{equation}\label{CCF}
\begin{aligned}
\left\{
  \begin{array}{ll}
    \frac{dw_i(t)}{dt} =  \Delta (\bar{K}-K )_i, & \hbox{ } \\
    w(0) = w_0, & \hbox{ }
  \end{array}
\right.
\end{aligned}
\end{equation}
where $K_i(t)$ is the generalized combinatorial curvature of
the discrete hyperbolic metric $l(t) = w(t)* \tilde{l}$,
$\Delta= (\frac{\partial K_i}{\partial w_j})_{n\times n}$ is the discrete Laplace operator defined by the Jacobian of $K(t)$ with respect to the discrete conformal factor $w(t)$, and $w_0\in \mathcal{W}= \mathcal{W}(\tilde{l})$ is an admissible discrete conformal factor.
\end{definition}

The combinatorial Calabi flow was first introduced by Ge in \cite{Ge-thesis} (see also \cite{Ge}) for Thurston's Euclidean circle packing metrics on closed triangulated surfaces and then further studied for different discrete metric structures on closed surfaces and 3-dimensional manifolds.
See, for instance, \cite{GH1,GX,WX, X, ZX} and others. This is the first time that the combinatorial Calabi flow is introduced on ideally triangulated surfaces with boundary.

We prove the following result on the long time existence and global convergence of combinatorial Calabi flows (\ref{CCF})
on ideally triangulated hyperbolic surfaces with boundary.

\begin{theorem} \label{main theorem on CCF}
Assume $(\Sigma, \mathcal{T}, \tilde{l})$ is an ideally triangulated surface with boundary with a discrete hyperbolic metric $\tilde{l} \in \mathbb{R}^E_{>0}$ and $\bar{K}\in \mathbb{R}^n_{>0}$ is a function defined on the boundary components $B=\{1,2,\cdots, n\}$.
Then the solution $w(t)$ to the combinatorial Calabi flow (\ref{CCF})  
exists for all time and converges exponentially fast 
for any initial $w_0\in \mathcal{W}$.
\end{theorem}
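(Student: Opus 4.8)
The plan is to recognize the combinatorial Calabi flow (\ref{CCF}) as a negative gradient flow for a strictly convex energy and then run the standard package for combinatorial curvature flows: a variational reduction, long-time existence from an a~priori estimate, and exponential decay from strong convexity. The structural facts I would import from Guo \cite{Guo} and the second author \cite{X3} are that $\mathcal{W}$ is open and convex, that the curvature map $K\colon\mathcal{W}\to\mathbb{R}^n_{>0}$, $w\mapsto K(w)$, is a smooth diffeomorphism, and that its Jacobian $\Delta(w)=\big(\partial K_i/\partial w_j\big)_{n\times n}$ is symmetric and negative definite on $\mathcal{W}$. Granting these, there is a unique $\bar w\in\mathcal{W}$ with $K(\bar w)=\bar K$, and since $\Delta$ is symmetric and $\mathcal{W}$ is convex the closed $1$-form $\sum_i(\bar K_i-K_i)\,dw_i$ is exact, so
\[ \mathcal{E}(w):=\int_{\bar w}^{w}\sum_{i=1}^{n}\big(\bar K_i-K_i(w')\big)\,dw'_i \]
is a well-defined smooth function on $\mathcal{W}$ with $\nabla\mathcal{E}=\bar K-K$ and $\operatorname{Hess}\mathcal{E}=-\Delta$; thus $\mathcal{E}$ is strictly convex, $\bar w$ is its unique critical point and global minimum, and $\mathcal{E}(\bar w)=0$. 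Furthermore $\mathcal{E}$ is proper on $\mathcal{W}$, i.e.\ every sublevel set $\{\mathcal{E}\le c\}$ is compact in $\mathcal{W}$ --- equivalently, $K$ is a diffeomorphism onto $\mathbb{R}^n_{>0}$, which is part of the input from \cite{Guo,X3}; this can also be checked directly, since along any sequence leaving every compact subset of $\mathcal{W}$ either some edge length $l_{ij}\to0$ (forcing $K_i\to+\infty$, hence $\partial\mathcal{E}/\partial w_i=\bar K_i-K_i\to-\infty$ and $\mathcal{E}\to+\infty$) or all $w_i\to+\infty$ (so $K\to0$ and $\nabla\mathcal{E}\to\bar K>0$, forcing $\mathcal{E}\to+\infty$ along the ray). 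In this notation (\ref{CCF}) reads $\dot w=\Delta(\bar K-K)=-\operatorname{Hess}\mathcal{E}\cdot\nabla\mathcal{E}$, which is also the negative gradient flow of the Calabi energy $\mathcal{C}(w)=\tfrac12\|\bar K-K(w)\|^2$.

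For long-time existence, note that the right-hand side of (\ref{CCF}) is a smooth vector field on the open set $\mathcal{W}$, so the Picard--Lindel\"of theorem gives a unique maximal solution $w(t)$ on an interval $[0,T)$. Along the flow,
\[ \frac{d}{dt}\mathcal{E}(w(t))=\nabla\mathcal{E}\cdot\dot w=(\bar K-K)^{\top}\Delta(\bar K-K)\le0 \]
since $\Delta$ is negative definite; hence $w(t)$ stays in the convex compact set $S:=\{w\in\mathcal{W}:\mathcal{E}(w)\le\mathcal{E}(w_0)\}$ for all $t\in[0,T)$. A trajectory confined to a fixed compact subset of the domain cannot escape in finite time, so $T=+\infty$.

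For exponential convergence, observe that on the compact set $S$ the continuous positive-definite matrix $-\Delta=\operatorname{Hess}\mathcal{E}$ satisfies $-\Delta\ge\lambda_0 I$ for some $\lambda_0>0$, so $\mathcal{E}$ is $\lambda_0$-strongly convex on the convex set $S\ni\bar w$; this gives, for every $w\in S$,
\[ \tfrac{\lambda_0}{2}\|w-\bar w\|^2\le\mathcal{E}(w)\le\tfrac{1}{2\lambda_0}\|\nabla\mathcal{E}(w)\|^2=\tfrac{1}{2\lambda_0}\|\bar K-K(w)\|^2 . \]
Writing $f(t)=\mathcal{E}(w(t))\ge0$, the computation above gives $f'(t)=(\bar K-K)^{\top}\Delta(\bar K-K)\le-\lambda_0\|\bar K-K\|^2\le-2\lambda_0^2 f(t)$, hence $f(t)\le f(0)e^{-2\lambda_0^2 t}$, and therefore $\|w(t)-\bar w\|\le\sqrt{2f(0)/\lambda_0}\;e^{-\lambda_0^2 t}\to0$. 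Thus $w(t)$ converges to $\bar w$ exponentially fast, i.e.\ $l(t)=w(t)*\tilde l$ converges exponentially to the discrete hyperbolic metric realizing the prescribed boundary lengths $\bar K$.

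I expect the real weight of the argument to sit entirely in the structural input cited at the outset: the symmetry and negative-definiteness of $\Delta$ --- which is precisely what makes $\Delta$ a bona fide discrete Laplace operator and $\mathcal{E}$ strictly convex --- together with the properness of $\mathcal{E}$, equivalently that the curvature map is a diffeomorphism onto $\mathbb{R}^n_{>0}$, which is what simultaneously produces the target conformal factor $\bar w$ and prevents the flow from leaving $\mathcal{W}$ or running to infinity. Once these are granted, the long-time existence and exponential convergence above are soft.
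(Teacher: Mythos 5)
Your overall strategy (variational structure, convexity, compact confinement, uniform spectral bound, exponential decay) is the paper's, but there is a genuine gap at the confinement step: the claim that $\mathcal{E}$ is proper on $\mathcal{W}$, i.e.\ that the sublevel set $S=\{\mathcal{E}\le \mathcal{E}(w_0)\}$ is compact \emph{in} $\mathcal{W}$. Neither of your justifications works. The asserted equivalence with ``$K$ is a diffeomorphism onto $\mathbb{R}^n_{>0}$'' is false as a general principle: on $(0,\infty)$ the function with $f'(x)=\bar K-x^{-1/2}$ is strictly convex, its gradient is a diffeomorphism onto $(-\infty,\bar K)$ (so the ``curvature'' $\bar K-f'=x^{-1/2}$ is a diffeomorphism onto $\mathbb{R}_{>0}$), yet $f$ stays bounded as $x\to 0^+$. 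Your ``direct check'' commits the same fallacy: blow-up of $\partial\mathcal{E}/\partial w_i$ does not force blow-up of $\mathcal{E}$. In fact, in this setting the claim is not just unjustified but false: by the cosine law, as $w$ approaches a point in the relative interior of $\partial_{ij}\mathcal{W}$ one has $l_{ij}\asymp\sqrt{\delta}$ (where $\delta=w_i+w_j+\ln\cosh\frac{\tilde l_{ij}}{2}$) and $K_i,K_j\asymp\frac12\ln(1/\delta)$, a merely logarithmic blow-up; since $\mathcal{E}$ is a line integral of $K-\bar K$, it remains \emph{bounded} on approach to such boundary points. So $S$ accumulates on $\partial\mathcal{W}$, is not compact in $\mathcal{W}$, there is no uniform $\lambda_0$ with $-\Delta\ge\lambda_0 I$ on $S$, and nothing yet prevents the trajectory from reaching $\partial\mathcal{W}$ (possibly in finite time). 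Consequently both your long-time existence argument and your decay estimate are unsupported as written. (Your dichotomy ``either some $l_{ij}\to0$ or all $w_i\to+\infty$'' is also inaccurate, though that is a minor point.)

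The paper closes exactly this hole with a second Lyapunov quantity: the Calabi energy $\mathcal{C}(w)=\frac12\sum_i(K_i-\bar K_i)^2$ decreases along the flow, so $|K_i(w(t))-\bar K_i|\le\sqrt{2\mathcal{C}(w_0)}$ for all $t$; on the other hand, Guo's estimate (Lemma \ref{proper}) shows $K_i>M$ whenever $w$ is within a suitable distance of $\partial_{ij}\mathcal{W}$, and taking $M=\max_i\{|\bar K_i|+\sqrt{2\mathcal{C}(w_0)}\}$ produces a buffer region $\mathcal{W}_\epsilon$ that the flow can never enter (Lemma \ref{w(t) can not in W espsilon}). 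Combined with the boundedness you (and the paper, Lemma \ref{w(t) is bounded along CCF}) obtain from the decrease of $\mathcal{E}$ and its coercivity at infinity, this yields the compact set $\Omega\subset\mathcal{W}$ on which your uniform bound $-\Delta\ge\lambda_0 I$ and the exponential decay (of $\mathcal{C}$ in the paper, of $\mathcal{E}$ in your version, either is fine) then go through. You need to add this boundary-avoidance argument; the rest of your proof is essentially the paper's.
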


In \cite{Guo}, Guo proved the following basic properties of the vertex scaling in Definition \ref{defn Guo's vertex scaling}.

\begin{theorem}[Guo \cite{Guo}]\label{negative definiteness of Laplacian}
Assume $(\Sigma, \mathcal{T}, \tilde{l})$ is an ideally triangulated surface with boundary with a discrete hyperbolic metric $\tilde{l} \in \mathbb{R}^E_{>0}$.
Then
\begin{itemize}
  \item[(1)] the admissible space $\mathcal{W}$ is a convex polytope;
  \item[(2)] the discrete Laplace operator $\Delta=(\frac{\partial K_i}{\partial w_j})_{n\times n}$ is symmetric and strictly negative definite on the admissible space $\mathcal{W}$.
  \item[(3)] The generalized combinatorial curvature map $K: \mathcal{W}\rightarrow \mathbb{R}^n_{>0}$ is a diffeomorphism. 
\end{itemize}
\end{theorem}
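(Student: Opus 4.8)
The plan is to prove the three assertions in order, reducing (2) to an analysis of a single hyperbolic right-angled hexagon and then deducing (3) from (1) and (2) by a convexity-plus-properness argument. For (1), note that any triple of positive numbers is realized by a unique hyperbolic right-angled hexagon, so the only condition for $l=w*\tilde l$ to be an honest discrete hyperbolic metric is $l_e>0$ for every edge $e$. Writing $e=ij$ for the edge joining the boundary components $i$ and $j$, relation (\ref{Guo's vertex scaling}) gives $l_{ij}>0\iff\cosh\frac{l_{ij}}{2}>1\iff w_i+w_j>-\log\cosh\frac{\tilde l_{ij}}{2}$. These are finitely many strict linear inequalities in $w\in\mathbb{R}^n$, one per edge, so $\mathcal{W}$ is an intersection of finitely many open half-spaces, i.e. an open convex polytope.

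For (2), I would first localize. Since $K_i$ is the total length of the boundary arcs lying on component $i$, and each such arc belongs to a unique face and has length determined, via the right-angled hexagon cosine law, by the three edge lengths of that face, the Jacobian decomposes as $\Delta=\sum_{f\in F}\Delta^{f}$, where $\Delta^{f}$ is supported on the (at most three) components met by $f$. It therefore suffices to show that every local block $\Delta^{f}$ is symmetric and negative definite. Differentiating (\ref{Guo's vertex scaling}) gives $\frac{\partial l_e}{\partial w_i}=2\coth\frac{l_e}{2}$ for each endpoint $i$ of $e$; composing with the arc-length derivatives $\frac{\partial\theta}{\partial l}$ obtained from the cosine law produces an explicit $3\times3$ matrix. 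Symmetry is the local variational statement that the relevant mixed second derivatives agree, which I would verify from the differentiated cosine law. Negative definiteness is the technical heart: one writes out the entries of the $3\times3$ block and checks, for instance via Sylvester's criterion on the leading principal minors or an explicit congruence to a diagonal form, that the block is negative definite for all admissible edge lengths. Summing over all faces, and using that every boundary component is met by at least one face, promotes these local facts to symmetry and strict negative definiteness of $\Delta=DK$ throughout $\mathcal{W}$.

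For (3), $K$ is smooth, and by (2) the differential $DK=\Delta$ is everywhere nonsingular, so $K$ is a local diffeomorphism and $K(\mathcal{W})$ is open in $\mathbb{R}^n_{>0}$. Injectivity follows from the convexity of $\mathcal{W}$ together with negative definiteness: for $w\neq w'$ the scalar function $t\mapsto\big(K(w'+t(w-w'))-K(w')\big)\cdot(w-w')$ has derivative $(w-w')^{\top}\Delta\,(w-w')<0$, so it is strictly decreasing and cannot return to its value $0$ at $t=0$, forcing $K(w)\neq K(w')$. Hence $K$ is a diffeomorphism onto the open set $K(\mathcal{W})\subseteq\mathbb{R}^n_{>0}$, and it remains to prove surjectivity. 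For this I would show that $K$ is proper as a map into $\mathbb{R}^n_{>0}$ by analyzing the curvature as $w$ approaches $\partial\mathcal{W}$ or escapes to infinity within $\mathcal{W}$: when some edge length degenerates to $0$ or some conformal factor diverges, the associated boundary lengths leave every compact subset of $\mathbb{R}^n_{>0}$. Properness makes $K(\mathcal{W})$ closed in $\mathbb{R}^n_{>0}$; being simultaneously open, closed and nonempty in the connected space $\mathbb{R}^n_{>0}$, it must equal $\mathbb{R}^n_{>0}$.

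The main obstacle is the single-hexagon analysis in (2): both the symmetry and the definiteness of the global operator rest entirely on the sign structure of the explicit $3\times3$ hexagon Jacobian, which must be extracted carefully from the right-angled hexagon cosine law, since the off-diagonal contributions mix the effects of lengthening adjacent and opposite edges and so their signs are not evident a priori. The second delicate point is the properness in (3), which requires sharp control of the boundary-arc lengths as the hexagons degenerate.
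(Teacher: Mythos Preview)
The paper does not prove this theorem: it is quoted verbatim as a result of Guo and used as a black box throughout (the only related content actually proved here is the explicit description of $\mathcal{W}$ in Lemma~\ref{Guo's thm on admissible space} and the blow-up estimate in Lemma~\ref{proper}, both also attributed to Guo). There is therefore no proof in the present paper to compare your proposal against.

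For what it is worth, your outline is sound and tracks Guo's original argument: part~(1) is exactly the computation recorded as Lemma~\ref{Guo's thm on admissible space}; part~(2) is indeed proved in Guo's paper by decomposing $\Delta$ face-by-face and checking the $3\times3$ hexagon block directly; and your injectivity argument in~(3) is the standard one (equivalently, injectivity of the gradient map of the strictly convex potential $\mathcal{E}$). The one place where your sketch is genuinely incomplete is precisely where you flag it: the negative definiteness of the $3\times3$ block is a nontrivial calculation with the right-angled hexagon cosine law, and the properness step in~(3) requires the kind of degeneration analysis partially recorded here as Lemma~\ref{proper}. Neither is supplied in this paper, so you would have to carry them out yourself or cite Guo.
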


Based on Theorem \ref{negative definiteness of Laplacian}, we can define the fractional discrete Laplace operator $\Delta^s$ for Guo's vertex scaling of discrete hyperbolic metrics on ideally triangulated surfaces with boundary for any $s\in \mathbb{R}$ as follows. Recall that if $A$ is a symmetric positive definite $n\times n$ matrix and $P\in O(n)$ is  an orthogonal matrix with
\begin{equation*}
  \begin{aligned}
A=P^T\cdot \text{diag}\{\lambda_1, \cdots, \lambda_n\}\cdot P,
\end{aligned}
\end{equation*}
where $\lambda_1, \cdots, \lambda_n$ are positive eigenvalues of the matrix $A$.
Then $A^s$ is defined to be
\begin{equation*}
  \begin{aligned}
A^s=P^T\cdot \text{diag}\{\lambda_1^s, \cdots, \lambda_n^s\}\cdot P.
\end{aligned}
\end{equation*}
The $2s$-th order fractional discrete Laplace operator $\Delta^s$ for Guo's vertex scaling of discrete hyperbolic metrics on ideally triangulated surfaces with boundary is defined to be
\begin{equation}\label{fractional Laplacian}
  \begin{aligned}
\Delta^s=-(-\Delta)^s,
\end{aligned}
\end{equation}
where $\Delta= (\frac{\partial K_i}{\partial w_j})_{n\times n}$ is the standard discrete Laplace operator in Theorem \ref{negative definiteness of Laplacian}.
If $s=0$, the fractional discrete Laplace operator $\Delta^s$ is reduced to the minus identity operator.
If $s=1$, the fractional discrete Laplace operator $\Delta^s$ is reduced to the discrete Laplace operator $\Delta= (\frac{\partial K_i}{\partial w_j})_{n\times n}$.
By Theorem \ref{negative definiteness of Laplacian},
the fractional discrete Laplace operator $\Delta^s$ is strictly negative definite on the admissible space $\mathcal{W}$ for any $s\in \mathbb{R}$.

Following Wu-Xu \cite{WX},
we introduce the following fractional combinatorial Calabi flow for
Guo's vertex scaling of discrete hyperbolic metrics on ideally triangulated surfaces with boundary.

\begin{definition}
Assume $(\Sigma, \mathcal{T})$ is an ideally triangulated surface with boundary.
Let $s\in \mathbb{R}$, $\tilde{l}\in \mathbb{R}^E_{>0}$ be a discrete hyperbolic metric defined on $(\Sigma, \mathcal{T})$ and
$\bar{K}\in \mathbb{R}^n_{>0}$ be a function defined on the boundary components $B=\{1,2,\cdots, n\}$.
The \textit{fractional combinatorial Calabi flow} is defined to be
\begin{equation}\label{FCCF}
\begin{aligned}
\left\{
  \begin{array}{ll}
    \frac{dw_i(t)}{dt} =  \Delta^s (\bar{K}-K )_i, & \hbox{ } \\
    w(0) = w_0, & \hbox{ }
  \end{array}
\right.
\end{aligned}
\end{equation}
where $K_i(t)$ is the generalized combinatorial curvature of the discrete hyperbolic metric $w(t)* \tilde{l}$,
$\Delta^s$ is the fractional discrete Laplace operator of $w(t)* \tilde{l}$ defined by (\ref{fractional Laplacian})
and $w_0\in \mathcal{W}= \mathcal{W}(\tilde{l})$ is an admissible discrete conformal factor.
\end{definition}

If $s=0$, the fractional combinatorial Calabi flow (\ref{FCCF}) is reduced to the combinatorial Yamabe flow (\ref{Xu's CYF}).
If $s=1$, the fractional combinatorial Calabi flow (\ref{FCCF}) is reduced to  the combinatorial Calabi flow (\ref{CCF}).
The fractional combinatorial Calabi flow (\ref{FCCF}) further covers the case of $s\neq 0, 1$.
Note that the fractional combinatorial Calabi flow (\ref{FCCF}) is in general a non-local combinatorial curvature flow.
We prove the following result on long time existence and global convergence of the fractional combinatorial Calabi flow (\ref{FCCF}).

\begin{theorem} \label{main theorem on FCCF}
Assume $(\Sigma, \mathcal{T})$ is an ideally triangulated surface with boundary.
Let $\tilde{l}\in \mathbb{R}^E_{>0}$ be a discrete hyperbolic metric defined on $(\Sigma, \mathcal{T})$ and
$\bar{K}\in \mathbb{R}^n_{>0}$ be a function defined on the boundary components $B=\{1,2,\cdots, n\}$.
Then the solution  $w(t)$ to the fractional combinatorial Calabi flow (\ref{FCCF})  exists for all time and converges exponentially fast for any initial $w_0\in \mathcal{W}$.
\end{theorem}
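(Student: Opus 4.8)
The plan is to carry out the usual variational argument for combinatorial curvature flows, with the fractional operator $\Delta^s$ playing the role of $\Delta$. By Theorem~\ref{negative definiteness of Laplacian}(3) the curvature map $K\colon\mathcal W\to\mathbb R^n_{>0}$ is a diffeomorphism, so there is a unique $\bar w\in\mathcal W$ with $K(\bar w)=\bar K$; this will be the limit metric. Since $-\Delta=-\bigl(\partial K_i/\partial w_j\bigr)$ is symmetric, positive definite and real-analytic in $w\in\mathcal W$, and $A\mapsto A^s$ is real-analytic on the cone of symmetric positive definite matrices, the right-hand side $w\mapsto\Delta^s(\bar K-K)$ of (\ref{FCCF}) is a smooth vector field on $\mathcal W$, so (\ref{FCCF}) has a unique maximal solution $w\colon[0,T)\to\mathcal W$.

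Next I would introduce a Lyapunov function. Because $\Delta$ is symmetric (Theorem~\ref{negative definiteness of Laplacian}(2)), the $1$-form $\sum_i(\bar K_i-K_i)\,dw_i$ is closed on the convex --- hence simply connected --- set $\mathcal W$, so it equals $d\mathcal E$ for
\[
  \mathcal E(w)=\int_{\bar w}^{w}\sum_{i=1}^{n}(\bar K_i-K_i)\,dw_i,
\]
and then $\nabla\mathcal E=\bar K-K$, $\mathrm{Hess}\,\mathcal E=-\Delta$. Thus $\mathcal E$ is strictly convex on $\mathcal W$, its unique critical point $\bar w$ is a global minimum, and $\mathcal E\ge\mathcal E(\bar w)=0$ with equality only at $\bar w$. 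Along (\ref{FCCF}),
\[
  \frac{d}{dt}\mathcal E(w(t))=\bigl\langle\nabla\mathcal E,\dot w\bigr\rangle=\bigl\langle\bar K-K,\ \Delta^s(\bar K-K)\bigr\rangle\le 0,
\]
with equality if and only if $K(w(t))=\bar K$, i.e.\ $w(t)=\bar w$, because $\Delta^s$ is strictly negative definite. Hence $\mathcal E$ is non-increasing along the flow and the orbit stays in the sublevel set $\Omega:=\{w\in\mathcal W:\mathcal E(w)\le\mathcal E(w_0)\}$.

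I expect long time existence to be the main obstacle: one must prevent the solution from leaving every compact subset of the polytope $\mathcal W$ in finite time. The clean way is to prove that $\mathcal E$ is proper on $\mathcal W$, so that $\Omega$ is compact and contained in $\mathcal W$; up to an additive constant $\mathcal E$ is precisely the energy whose negative gradient flow is the combinatorial Yamabe flow (\ref{Xu's CYF}), and this properness is the estimate underlying the analysis of (\ref{Xu's CYF}) in \cite{X3}. Concretely it rests on the fact that some $K_i$ blows up as $w\to\partial\mathcal W$ while $K$ decays to $0$ along the recession directions of $\mathcal W$, which forces $\mathcal E(w)\to+\infty$ in both regimes. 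Granting this, the orbit $\{w(t):t\in[0,T)\}$ lies in the compact set $\Omega\subset\mathcal W$, so the maximal solution of the smooth ODE (\ref{FCCF}) cannot exhaust $\mathcal W$; hence $T=+\infty$.

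Finally, for exponential convergence: on the compact set $\Omega$ the eigenvalues of $-\Delta$ are bounded between two positive constants, so $(-\Delta)^s\ge c\,\mathrm{Id}$ on $\Omega$ for some $c>0$; therefore $-\tfrac{d}{dt}\mathcal E(w(t))=\bigl\langle\bar K-K,(-\Delta)^s(\bar K-K)\bigr\rangle\ge c\,\lVert\bar K-K(w(t))\rVert^2$, and integrating over $[0,\infty)$ gives $\int_0^\infty\lVert\bar K-K(w(t))\rVert^2\,dt\le\mathcal E(w_0)/c<\infty$. Hence $K(w(t_k))\to\bar K$ for some $t_k\to\infty$, and by compactness of $\Omega$ and injectivity of $K$ we get $w(t_k)\to\bar w$. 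To pass from this subsequence to exponential decay of the whole trajectory I would linearize (\ref{FCCF}) at $\bar w$: writing $u=w-\bar w$ and using $DK(\bar w)=\Delta(\bar w)$ one finds $\dot u=-\bigl(-\Delta(\bar w)\bigr)^{s+1}u+O(\lVert u\rVert^2)$, where $\bigl(-\Delta(\bar w)\bigr)^{s+1}$ is positive definite; a standard Lyapunov estimate for $\tfrac12\lVert u\rVert^2$ on a small ball about $\bar w$ then shows that once some $w(t_k)$ enters this ball the solution stays there and $\lVert w(t)-\bar w\rVert\le C e^{-\lambda t}$ for constants $C,\lambda>0$. All of this is a routine transcription of the closed-surface arguments (see \cite{Ge,WX}) to the present setting, with $\Delta$ replaced by the commuting family $\Delta^s$; only the properness input of the previous paragraph is genuinely specific to surfaces with boundary.
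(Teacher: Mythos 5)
Your overall strategy (the convex Lyapunov function $\mathcal{E}$, strict negative definiteness of $\Delta^s$, confinement of the orbit in a compact subset of $\mathcal{W}$, then uniform eigenvalue bounds/linearization for the exponential rate) is the same as the paper's, but there is a genuine gap at the confinement step. You reduce long time existence to the claim that $\mathcal{E}$ is proper on $\mathcal{W}$, i.e. that $\mathcal{E}(w)\to+\infty$ both as $|w|\to\infty$ and as $w\to\partial\mathcal{W}$, and you justify the boundary regime by ``some $K_i$ blows up as $w\to\partial\mathcal{W}$''. That inference does not hold: near a facet $\partial_{ij}\mathcal{W}$, writing $u=w_i+w_j+\ln\cosh\frac{\tilde l_{ij}}{2}\to 0^+$, one has $\cosh\frac{l_{ij}}{2}=e^{u}$, hence $l_{ij}\asymp\sqrt{u}$, and by the cosine law (\ref{theta tends infinity uniformly}) the arcs satisfy $\theta_i^{jk}\asymp -\tfrac12\ln u$, so $K_i$ blows up only logarithmically in the defining linear functional of the facet. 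This singularity is integrable, so $\mathcal{E}$ stays \emph{bounded} as $w$ approaches finite boundary points of the polytope; consequently the sublevel set $\Omega=\{\mathcal{E}\le\mathcal{E}(w_0)\}$ is in general not compactly contained in $\mathcal{W}$, and monotonicity of $\mathcal{E}$ alone does not prevent the metric from degenerating ($l_{ij}\to 0$) in finite time. So your long time existence argument is incomplete as stated (the properness you attribute to \cite{X3} is exactly what fails near $\partial\mathcal{W}$).

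The paper closes this gap with a second monotone quantity that your proposal never uses for confinement: the combinatorial Calabi energy $\mathcal{C}(w)=\tfrac12\sum_i(K_i-\bar K_i)^2$, which decreases along (\ref{FCCF}) because $\frac{d\mathcal{C}}{dt}=-(K-\bar K)^T(-\Delta)^{s+1}(K-\bar K)\le 0$ (Lemma \ref{decreasing of Calabi energy along FCCF}). Monotonicity of $\mathcal{C}$ gives the a priori bound $|K_i(w(t))-\bar K_i|\le\sqrt{2\mathcal{C}(w_0)}$ for all $t$, while Lemma \ref{proper} says $K_i\to+\infty$ uniformly as $w$ approaches $\partial_{ij}\mathcal{W}$; hence the solution can never enter an $\epsilon$-neighborhood of $\partial\mathcal{W}$ (Lemma \ref{w(t) can not in W espsilon along FCCF}). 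The convexity of $\mathcal{E}$ with interior critical point $\bar w$ is used only to rule out escape to infinity (Lemma \ref{w(t) is bounded along FCCF}). These two facts together produce the compact set $\Omega\subset\subset\mathcal{W}$ you need, after which your eigenvalue/linearization argument, or the paper's differential inequality $\frac{d\mathcal{C}}{dt}\le-\lambda_0\,\mathcal{C}$ on $\Omega$, yields exponential convergence to the unique $\bar w$ with $K(\bar w)=\bar K$. To repair your proof, replace the properness claim for $\mathcal{E}$ near $\partial\mathcal{W}$ by this $\mathcal{C}$-based boundary-avoidance argument; the rest of your outline then goes through.
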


\subsection{Organization of the paper} The paper is organized as follows. In Section \ref{section 2}, we study the basic properties of the combinatorial Calabi flow (\ref{CCF})
and prove Theorem \ref{main theorem on CCF}.
In Section \ref{section 3}, we summarize the basic properties of the fractional combinatorial Calabi flow (\ref{FCCF})
and prove Theorem \ref{main theorem on FCCF}.
\\
\\
\textbf{Acknowledgements}\\[8pt]
The authors thank Professor Ren Guo for his constant encouragements and supports
and thank Professor Ze Zhou for valuable communications.
The research of the second author is supported by the Fundamental Research Funds for the Central Universities under Grant no. 2042020kf0199.

\section{Combinatorial Calabi flow on surfaces with boundary}\label{section 2}
In this section, we prove the main result of this paper about the convergence of combinatorial Calabi flow on ideally triangulated surface with boundary.
\subsection{Basic properties of combinatorial Calabi flow}
\begin{lemma}\label{CCF is a gradient flow}
The combinatorial Calabi flow (\ref{CCF}) is a negative gradient flow of the  combinatorial Calabi energy defined by
$$\mathcal{C}(w)  = \frac{1}{2}\sum_{i = 1}^n(K_i(w) - \bar{K}_i)^2.$$
\end{lemma}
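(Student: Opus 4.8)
The plan is to compute the ordinary (Euclidean) gradient of $\mathcal{C}$ on $\mathcal{W}$ and recognize that it is exactly the negative of the right-hand side of (\ref{CCF}). First I would apply the chain rule to $\mathcal{C}(w)=\frac12\sum_{i=1}^n(K_i(w)-\bar K_i)^2$, obtaining
\begin{equation*}
\frac{\partial \mathcal{C}}{\partial w_j}(w)=\sum_{i=1}^n (K_i(w)-\bar K_i)\,\frac{\partial K_i}{\partial w_j}=\sum_{i=1}^n (K_i-\bar K_i)\,\Delta_{ij},
\end{equation*}
where $\Delta=\bigl(\frac{\partial K_i}{\partial w_j}\bigr)_{n\times n}$ is the discrete Laplace operator of $w*\tilde l$.

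Next I would invoke part (2) of Theorem \ref{negative definiteness of Laplacian}: on the admissible space $\mathcal{W}$ the matrix $\Delta$ is symmetric, so $\Delta_{ij}=\Delta_{ji}$ and the displayed sum equals $\sum_{i=1}^n \Delta_{ji}(K_i-\bar K_i)=\bigl[\Delta(K-\bar K)\bigr]_j=-\bigl[\Delta(\bar K-K)\bigr]_j$. Hence $\nabla_w\mathcal{C}=\Delta(K-\bar K)$, and therefore (\ref{CCF}) can be rewritten as $\frac{dw}{dt}=\Delta(\bar K-K)=-\nabla_w\mathcal{C}$, i.e. the combinatorial Calabi flow is the negative gradient flow of $\mathcal{C}$. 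As a byproduct one also reads off that $\mathcal{C}$ is non-increasing along (\ref{CCF}), with $\frac{d}{dt}\mathcal{C}(w(t))=-\|\nabla_w\mathcal{C}\|^2\le 0$, which will be useful later.

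The computation itself is elementary, so the only genuine point to be careful about — and the step I would flag as the crux — is the symmetry of $\Delta$: without it the chain rule would only give $\nabla_w\mathcal{C}=\Delta^{T}(K-\bar K)$, and the identification with the flow (\ref{CCF}) would break. This symmetry is exactly what Theorem \ref{negative definiteness of Laplacian} (Guo's result) supplies, and it holds on all of the polytope $\mathcal{W}$, so the gradient-flow interpretation is valid globally rather than merely locally.
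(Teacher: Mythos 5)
Your proposal is correct and follows essentially the same route as the paper: a chain-rule computation of $\partial\mathcal{C}/\partial w_j$ identified with $-\,dw_j/dt$ via the discrete Laplacian. The only difference is that you make explicit the use of the symmetry of $\Delta$ from Theorem \ref{negative definiteness of Laplacian}, a step the paper's one-line computation leaves implicit, and rightly so, since without symmetry one would only get $\Delta^{T}(K-\bar K)$.
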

\begin{proof}
By direct calculations, we have
$$\frac{\partial \mathcal{C}}{\partial w_j}= \sum_{i = 1}^n \frac{\partial K_i}{\partial w_j}(K_i - \bar{K}_i) =  \Delta (K - \bar{K})_j = -\frac{dw_j}{dt},$$
which implies that the combinatorial Calabi flow (\ref{CCF}) is a negative gradient flow of the combinatorial Calabi energy
$\mathcal{C}(w(t))$.
\qed
\end{proof}

As a corollary, we have the following result.
\begin{corollary}\label{decreasing of Calabi energy}
  The combinatorial Calabi energy $\mathcal{C}(w)$ is decreasing along the combinatorial Calabi flow (\ref{CCF}).
\end{corollary}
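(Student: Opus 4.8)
The plan is simply to differentiate the Calabi energy along the flow and invoke the negative-gradient-flow structure established in Lemma \ref{CCF is a gradient flow}. Let $w(t)$ be a solution to (\ref{CCF}). By the chain rule,
$$\frac{d}{dt}\mathcal{C}(w(t)) = \sum_{j=1}^n \frac{\partial \mathcal{C}}{\partial w_j}\,\frac{dw_j}{dt}.$$
Lemma \ref{CCF is a gradient flow} gives $\frac{\partial \mathcal{C}}{\partial w_j} = -\frac{dw_j}{dt}$ along the flow, so substituting yields
$$\frac{d}{dt}\mathcal{C}(w(t)) = -\sum_{j=1}^n \left(\frac{dw_j}{dt}\right)^2 \le 0,$$
and hence $\mathcal{C}(w(t))$ is non-increasing in $t$. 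This is the whole argument.

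The only point that deserves a remark is that the computation is legitimate, i.e. that $w(t)$ remains in the admissible space $\mathcal{W}$ where $K$, and therefore $\mathcal{C}$, is smooth; this is immediate from the fact that (\ref{CCF}) is an ODE on the open set $\mathcal{W}$, so a solution exists at least locally and stays in $\mathcal{W}$ on its interval of existence (the extension to all time is carried out later in this section and is not needed here). One may additionally note that the inequality is strict unless $\frac{dw}{dt} = \Delta(\bar K - K) = 0$; since $\Delta$ is strictly negative definite on $\mathcal{W}$ by Theorem \ref{negative definiteness of Laplacian}(2), this forces $K = \bar K$, i.e. an equilibrium of the flow. Thus $\mathcal{C}(w(t))$ is in fact strictly decreasing away from equilibria.

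I do not expect any genuine obstacle here: the corollary is an immediate consequence of the gradient-flow identity in Lemma \ref{CCF is a gradient flow}, and the displayed one-line computation is essentially the entire proof.
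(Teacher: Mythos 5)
Your proof is correct and is essentially the paper's own argument: differentiate $\mathcal{C}(w(t))$, substitute the gradient identity $\partial \mathcal{C}/\partial w_j = -dw_j/dt$ from Lemma \ref{CCF is a gradient flow}, and conclude $\frac{d}{dt}\mathcal{C} = -\sum_j (dw_j/dt)^2 = -\sum_j \big(\Delta(K-\bar K)_j\big)^2 \le 0$, which is exactly the paper's computation. Your added remarks on admissibility and on strictness away from equilibria are consistent with the paper's observation and introduce nothing new or problematic.
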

\begin{proof}
By direct calculations, we have
$$\frac{d\mathcal{C}(w(t))}{dt} = \sum_{i=1}^n \frac{\partial \mathcal{C}}{\partial w_i }\frac{dw_i}{dt} =  -\sum_{i=1}^n (\Delta (K - \bar{K})_i)^2\leq 0,$$
the right side of which is strictly negative unless $K = \bar{K}$.
\qed
\end{proof}

By Theorem \ref{negative definiteness of Laplacian}, the following function
\begin{equation}\label{energy function E}
\mathcal{E}(w) = - \int_0^w \sum_{i = 1}^n (K_i - \bar{K}_i) dw_i
\end{equation}
is a well-defined smooth convex function defined on the admissible space $\mathcal{W}$.
Guo \cite{Guo} first proved that the energy function $\mathcal{E}(w)$ with $\bar{K}=0$
is a strictly convex energy function on the admissible space $\mathcal{W}$.
The global rigidity of Guo's vertex scaling of
discrete hyperbolic metrics with respect to the lengths of the boundary components in \cite{Guo} follows from the convexity of this function.

The following monotonicity property of the energy function $\mathcal{E}(w)$ holds along the combinatorial Calabi flow (\ref{CCF}).

\begin{lemma}\label{decreasing of E}
\label{decrease}
The energy function $\mathcal{E}(w)$ defined by (\ref{energy function E}) is decreasing along the combinatorial Calabi flow (\ref{CCF}).
\end{lemma}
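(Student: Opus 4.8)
The plan is to differentiate $\mathcal{E}(w(t))$ along the flow using the chain rule and then recognize the resulting expression as a quadratic form associated with the discrete Laplace operator. First I would record that, by the very definition (\ref{energy function E}) of $\mathcal{E}$ together with the symmetry of $\Delta$ from Theorem \ref{negative definiteness of Laplacian} (which guarantees the line integral is path-independent and hence $\mathcal{E}$ is well-defined and smooth on $\mathcal{W}$), the gradient of $\mathcal{E}$ is
\begin{equation*}
\frac{\partial \mathcal{E}}{\partial w_i} = -(K_i - \bar{K}_i) = (\bar{K}_i - K_i).
\end{equation*}

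Next I would substitute the flow equation (\ref{CCF}), namely $\frac{dw_i}{dt} = \Delta(\bar{K}-K)_i$, into the chain rule computation:
\begin{equation*}
\frac{d\mathcal{E}(w(t))}{dt} = \sum_{i=1}^n \frac{\partial \mathcal{E}}{\partial w_i}\frac{dw_i}{dt} = \sum_{i=1}^n (\bar{K}_i - K_i)\,\Delta(\bar{K}-K)_i = (\bar{K}-K)^T \Delta (\bar{K}-K).
\end{equation*}

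Finally I would invoke part (2) of Theorem \ref{negative definiteness of Laplacian}, which asserts that $\Delta$ is strictly negative definite on $\mathcal{W}$, to conclude that the right-hand side is $\le 0$, and is strictly negative unless $\bar{K}-K = 0$. This shows $\mathcal{E}(w(t))$ is (strictly, away from the target curvature) decreasing along the combinatorial Calabi flow. I do not anticipate any real obstacle here: the only point requiring a little care is making sure the well-definedness of $\mathcal{E}$ and the identification of its gradient are justified before differentiating, but both follow immediately from the symmetry and negative definiteness already established in Theorem \ref{negative definiteness of Laplacian}; the rest is a one-line quadratic-form estimate.
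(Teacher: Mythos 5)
Your proposal is correct and follows essentially the same argument as the paper: compute $\frac{d\mathcal{E}}{dt}$ by the chain rule, substitute the flow equation, and conclude from the strict negative definiteness of $\Delta$ (the paper writes the resulting quadratic form as $(K-\bar{K})^T\Delta(K-\bar{K})$, which equals your $(\bar{K}-K)^T\Delta(\bar{K}-K)$). Your extra remark on well-definedness of $\mathcal{E}$ via symmetry of $\Delta$ (together with convexity of $\mathcal{W}$) matches the paper's justification preceding the lemma.
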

\begin{proof}
By direct calculations, we have
$$\frac{d\mathcal{E}}{dt} = \sum_{i = 1}^n \frac{\partial \mathcal{E}}{\partial w_i} \frac{dw_i}{dt}
= \sum_{i = 1}^n (K - \bar{K})_i \Delta (K - \bar{K})_i =(K - \bar{K})^T\Delta (K - \bar{K})\leq 0,$$
where the last inequality follows from Guo's Theorem \ref{negative definiteness of Laplacian} on the negative definiteness of
the discrete Laplace operator $\Delta$.
\qed
\end{proof}

\subsection{The long time behavior of combinatorial Calabi flow}
As the combinatorial Calabi flow (\ref{CCF}) is essentially a system of ordinary differential equations, the solution to the combinatorial Calabi flow (\ref{CCF})
exists locally by the standard theory in dynamical system.
The focus of this subsection is on the long time behavior of the combinatorial Calabi flow (\ref{CCF}).

\begin{lemma}
If the solution $w(t)$ to the combinatorial Calabi flow (\ref{CCF}) exists for all time and
converges to $\bar{w}\in \mathcal{W}$, then $K(\bar{w})=\bar{K}$.
\end{lemma}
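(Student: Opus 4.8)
The plan is to show that the limit $\bar w$ must be an equilibrium of the flow, and then to invoke the nonsingularity of the discrete Laplace operator. The key structural input is Corollary \ref{decreasing of Calabi energy}, which gives $\frac{d}{dt}\mathcal C(w(t)) = -\sum_{i=1}^n\big(\Delta(K-\bar K)_i\big)^2\le 0$ along the combinatorial Calabi flow (\ref{CCF}), so that $\mathcal C(w(t))$ is monotone decreasing; since $\mathcal C\ge 0$, the limit $\lim_{t\to\infty}\mathcal C(w(t))$ exists and is finite. On the other hand, $\mathcal C$ is a continuous function of $w$ (a polynomial in the $K_i$, which are smooth on $\mathcal W$ by Theorem \ref{negative definiteness of Laplacian}) and $w(t)\to\bar w$, so this limit equals $\mathcal C(\bar w)$.

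Next I would produce a sequence of times along which the velocity tends to zero. Writing $g(t):=\sum_{i=1}^n\big(\Delta(K-\bar K)_i\big)^2 = -\frac{d}{dt}\mathcal C(w(t))\ge 0$, integration over $[m,m+1]$ for $m\in\mathbb N$ gives $\int_m^{m+1} g(t)\,dt = \mathcal C(w(m))-\mathcal C(w(m+1))\to 0$ as $m\to\infty$. By the mean value theorem for integrals there is $t_m\in(m,m+1)$ with $g(t_m)\to 0$, i.e. $\Delta\big(w(t_m)\big)\big(K(w(t_m))-\bar K\big)\to 0$ in $\mathbb R^n$.

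Now I would pass to the limit. Since $t_m\to\infty$ we have $w(t_m)\to\bar w\in\mathcal W$, and both $K$ and $\Delta=\big(\tfrac{\partial K_i}{\partial w_j}\big)$ are smooth, hence continuous, on $\mathcal W$ by Theorem \ref{negative definiteness of Laplacian}. Therefore $\Delta(\bar w)\big(K(\bar w)-\bar K\big)=0$. By part (2) of Theorem \ref{negative definiteness of Laplacian}, $\Delta(\bar w)$ is strictly negative definite, in particular invertible, so $K(\bar w)-\bar K=0$, which is the claim. An equivalent route avoids the subsequence: if $K(\bar w)\ne\bar K$, then $v:=\Delta(\bar w)(\bar K-K(\bar w))\ne 0$ by invertibility of $\Delta(\bar w)$, and choosing a linear functional $\phi$ with $\phi(v)>0$ one obtains, from continuity of the velocity field and $w(t)\to\bar w$, that $\frac{d}{dt}\phi(w(t))\ge\tfrac12\phi(v)>0$ for all large $t$, forcing $\phi(w(t))\to+\infty$ and contradicting $w(t)\to\bar w$.

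I do not expect a genuine obstacle here: the only point requiring a word of care is that the convergence hypothesis places $\bar w$ inside the admissible space $\mathcal W$, so that $K$ and $\Delta$ are defined and continuous at $\bar w$; granting this, the argument is the standard fact that an $\omega$-limit of a gradient-type flow is a critical point of the energy, combined with the nondegeneracy of $\Delta$ supplied by Guo's Theorem \ref{negative definiteness of Laplacian}.
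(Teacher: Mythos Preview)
Your proof is correct and follows essentially the same approach as the paper: use the monotonicity and nonnegativity of $\mathcal{C}(w(t))$ from Corollary \ref{decreasing of Calabi energy} to extract, via the mean value theorem on intervals $[m,m+1]$, a sequence of times along which $\Delta(K-\bar K)\to 0$, then pass to the limit by continuity and invoke the negative definiteness of $\Delta$ from Theorem \ref{negative definiteness of Laplacian}. The only cosmetic difference is that the paper applies the Lagrange mean value theorem to $\mathcal{C}(w(n+1))-\mathcal{C}(w(n))$ directly rather than the integral mean value theorem to $g$, and does not include your alternative argument via a linear functional.
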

\begin{proof}
  By the proof of  Corollary \ref{decreasing of Calabi energy}, we have
$$\frac{d\mathcal{C}(w(t))}{dt} =  - \sum_{i = 1}^n (\Delta (K - \bar{K})_i)^2\leq 0,$$
which implies that the combinatorial Calabi energy $\mathcal{C}(w)$ is decreasing along the combinatorial Calabi flow (\ref{CCF}).
Note that $\mathcal{C}(w)\geq 0$ by definition, and $\lim_{t\rightarrow +\infty}\mathcal{C}(w(t))$ exists.
As a result, there exists $\xi_n\in (n,  n+1)$ such that
\begin{equation*}
\mathcal{C}(w(n+1))-\mathcal{C}(w(n))=\frac{d}{dt}\Big|_{t=\xi_n}\mathcal{C}(w(t))=
- \sum_{i = 1}^n \big(\Delta (K(w(t)) - \bar{K})_i\big)^2|_{t=\xi_n}\rightarrow 0,\ \text{as}\ n\rightarrow +\infty.
\end{equation*}
By the continuity of the generalized combinatorial curvature $K$ in the discrete conformal factor $w$, we have
$$\Delta (K(w)-\bar{K})|_{w=\bar{w}}=0$$
by the assumption $\lim_{t\rightarrow +\infty}w(t)=\bar{w}$.
By the strictly negative definiteness of the discrete Laplace operator $\Delta$ in Theorem \ref{negative definiteness of Laplacian}, we have
$K(\bar{w})=\bar{K}.$
\qed
\end{proof}

\begin{lemma}\label{w(t) is bounded along CCF}
Suppose $\bar{K}\in \mathbb{R}^n_{>0}$ is a function defined on the boundary components $B=\{1,2,\cdots, n\}$.
Then for any initial value $w(0) = w_0\in \mathcal{W}$,
the solution $w(t)$ to the combinatorial Calabi flow (\ref{CCF})
stays in a bounded  subset of the admissible space $\mathcal{W}$.
\end{lemma}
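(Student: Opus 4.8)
The plan is to confine the trajectory to a sublevel set of the energy function $\mathcal{E}$ from (\ref{energy function E}) and to prove that this sublevel set is bounded. Throughout, let $w(t)$ be the solution on its maximal interval of existence $[0,T)$. By Lemma \ref{decrease}, $\mathcal{E}$ is non-increasing along the combinatorial Calabi flow (\ref{CCF}), so $\mathcal{E}(w(t))\le \mathcal{E}(w_0)$ for every $t\in[0,T)$; hence $w(t)\in\Omega:=\{w\in\mathcal{W}:\mathcal{E}(w)\le \mathcal{E}(w_0)\}$ for all such $t$. It therefore suffices to show that $\Omega$ is a bounded subset of $\mathbb{R}^n$.

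The first step is to identify the minimizer of $\mathcal{E}$. Since $\bar K\in\mathbb{R}^n_{>0}$ and $K:\mathcal{W}\to\mathbb{R}^n_{>0}$ is a diffeomorphism by Theorem \ref{negative definiteness of Laplacian}~(3), there is a unique $\bar w\in\mathcal{W}$ with $K(\bar w)=\bar K$. As $\nabla\mathcal{E}=\bar K-K$ and the Hessian of $\mathcal{E}$ equals $-\Delta$, which is positive definite on $\mathcal{W}$ by Theorem \ref{negative definiteness of Laplacian}~(2), the point $\bar w$ is the unique critical point, and hence the unique global minimum, of the strictly convex function $\mathcal{E}$. In particular $\mathcal{E}(\bar w)\le \mathcal{E}(w)$ for all $w\in\mathcal{W}$.

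The second step is to show that $\mathcal{E}$ tends to $+\infty$ along every ray emanating from $\bar w$ that remains in $\mathcal{W}$. Given a unit vector $v$ with $\bar w+tv\in\mathcal{W}$ for all $t\ge 0$, put $g(t)=\mathcal{E}(\bar w+tv)$. Then $g'(0)=\nabla\mathcal{E}(\bar w)\cdot v=0$ and $g''(t)=v^{T}\big(-\Delta(\bar w+tv)\big)v>0$, so $g'$ is strictly increasing; consequently $g'(1)>0$ and $g(t)\ge g(1)+g'(1)(t-1)\to +\infty$ as $t\to +\infty$. Now suppose, for contradiction, that $\Omega$ is unbounded, and choose $w^{(k)}\in\Omega$ with $|w^{(k)}|\to\infty$; after passing to a subsequence the unit vectors $v^{(k)}=(w^{(k)}-\bar w)/|w^{(k)}-\bar w|$ converge to some unit vector $v$. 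Since $\mathcal{W}$ is open and convex and contains $\bar w$ together with every segment $[\bar w,w^{(k)}]$, a standard fact about open convex sets shows that the entire ray $\{\bar w+tv:t\ge 0\}$ lies in $\mathcal{W}$. For a fixed $t_0>0$ and all large $k$, convexity of $\mathcal{E}$ along $[\bar w,w^{(k)}]$ together with $\mathcal{E}(\bar w)\le \mathcal{E}(w_0)$ and $\mathcal{E}(w^{(k)})\le \mathcal{E}(w_0)$ gives
$$\mathcal{E}\big(\bar w+t_0 v^{(k)}\big)\le \Big(1-\tfrac{t_0}{|w^{(k)}-\bar w|}\Big)\mathcal{E}(\bar w)+\tfrac{t_0}{|w^{(k)}-\bar w|}\,\mathcal{E}(w^{(k)})\le \mathcal{E}(w_0);$$
letting $k\to\infty$ yields $\mathcal{E}(\bar w+t_0 v)\le \mathcal{E}(w_0)$ for every $t_0>0$, contradicting $g(t_0)=\mathcal{E}(\bar w+t_0 v)\to +\infty$. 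Hence $\Omega$ is bounded, and so is the trajectory $w(t)$.

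I expect the last step to be the main obstacle: one must rule out escape of $w(t)$ to infinity inside the unbounded polytope $\mathcal{W}$. The mechanism is that strict convexity of $\mathcal{E}$ together with the existence of an interior minimizer forces $\mathcal{E}$ to be coercive along every recession direction of $\mathcal{W}$, and the normal-family argument above upgrades this directional growth into boundedness of the whole sublevel set $\Omega$. (If $\mathcal{W}$ is itself bounded, the statement is immediate.)
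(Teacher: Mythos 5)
Your proof is correct and follows essentially the same route as the paper: locate the minimizer $\bar w$ of $\mathcal{E}$ via the diffeomorphism property of $K$, use the monotonicity of $\mathcal{E}$ along the flow to trap $w(t)$ in a sublevel set, and conclude from coercivity of the strictly convex $\mathcal{E}$ on $\mathcal{W}$. The only difference is that the paper simply asserts $\lim_{w\to\infty,\,w\in\mathcal{W}}\mathcal{E}(w)=+\infty$ from strict convexity plus the interior critical point, whereas you supply the recession-direction/subsequence argument proving it, which is a welcome (and correct) filling-in of that step.
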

\begin{proof}
By Theorem \ref{negative definiteness of Laplacian}, for any $\bar{K}\in \mathbb{R}^n_{>0}$,
there exists 
$\bar{w}\in \mathcal{W}$ such that $\bar{K}=K(\overline{w})$, which implies
$$\nabla \mathcal{E}(\bar{w}) = - (K(\bar{w}) - \bar{K})=0.$$
Note that $\mathcal{E}(w)$ is a strictly convex function on $\mathcal{W} \subset \mathbb{R}^n$,
we have
\begin{equation}\label{bdd proof eqn 1}
\lim_{w\to \infty, w\in \mathcal{W}} \mathcal{E}(w) = +\infty.
\end{equation}
Since $\mathcal{E}(w)$ is decreasing along the combinatorial Calabi flow (\ref{CCF}) by Lemma \ref{decreasing of E}, we have
\begin{equation}\label{bdd proof eqn 2}
\mathcal{E}(w(t)) \leq \mathcal{E}(w(0)).
\end{equation}
Combining (\ref{bdd proof eqn 1}) and (\ref{bdd proof eqn 2}), we prove that
the solution $w(t)$ to the combinatorial Calabi flow (\ref{CCF}) stays in a bounded subset of $\mathbb{R}^n$ along the flow.
\qed
\end{proof}

To prove the long time existence of the solution $w(t)$ to combinatorial Calabi flow (\ref{CCF}),
we need to further prove that the solution $w(t)$ stays in a compact subset of the
admissible space $\mathcal{W}$.
Recall the following characterization of the admissible space $\mathcal{W}$ obtained by Guo \cite{Guo}.

\begin{lemma}[Guo \cite{Guo}]\label{Guo's thm on admissible space}
The admissible space
\begin{equation}\label{admissible space}
  \mathcal{W}=\cap_{\{ij\}\in E}\mathcal{W}_{ij},
\end{equation}
where
$$\mathcal{W}_{ij}=\{w\in \mathbb{R}^n| w_i+w_j>-\ln \cosh \frac{\tilde{l}_{ij}}{2}\}.$$
\end{lemma}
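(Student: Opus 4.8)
The plan is to unwind the definition of the admissible space $\mathcal{W}$ and to observe that the only obstruction to admissibility is the positivity of the edge lengths, a condition that decouples into one linear inequality per edge.

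First I would recall the realization fact underlying the notion of a discrete hyperbolic metric: for any three positive numbers there is a unique hyperbolic right-angled hexagon whose three pairwise non-adjacent edges have those lengths, and gluing such hexagons along the edges of $\mathcal{T}$ by hyperbolic isometries always yields a hyperbolic surface with totally geodesic boundary, with no further compatibility requirement. Consequently a function $l\colon E\to\mathbb{R}$ is a discrete hyperbolic metric on $(\Sigma,\mathcal{T})$ if and only if $l\in\mathbb{R}^E_{>0}$. Hence, by the very definition of admissibility, a discrete conformal factor $w\in\mathbb{R}^n$ lies in $\mathcal{W}$ exactly when the function $l=w*\tilde{l}$ determined by (\ref{Guo's vertex scaling}) satisfies $l_{ij}>0$ for every edge $\{ij\}\in E$.

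Next I would translate the condition $l_{ij}>0$ into a condition on $w$. For a fixed edge $\{ij\}$, the right-hand side of (\ref{Guo's vertex scaling}), namely $e^{w_i+w_j}\cosh\frac{\tilde{l}_{ij}}{2}$, is a positive real number for every $w$; since $\cosh$ maps $[0,+\infty)$ bijectively onto $[1,+\infty)$, this number equals $\cosh\frac{l_{ij}}{2}$ for a (unique) $l_{ij}\ge 0$ precisely when it is $\ge 1$, and then $l_{ij}>0$ if and only if it is strictly greater than $1$. Therefore the edge $\{ij\}$ contributes exactly the constraint
\[
e^{w_i+w_j}\cosh\tfrac{\tilde{l}_{ij}}{2}>1 ,
\]
which, upon taking logarithms, is equivalent to $w_i+w_j>-\ln\cosh\frac{\tilde{l}_{ij}}{2}$, i.e. $w\in\mathcal{W}_{ij}$. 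Intersecting over all edges gives $\mathcal{W}=\cap_{\{ij\}\in E}\mathcal{W}_{ij}$, which is the assertion.

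The argument has no genuine obstacle; the single point that deserves emphasis is the decoupling used in the second paragraph, namely that admissibility is imposed edge by edge because the geometric realization of each hexagon depends only on the lengths assigned to its own three red edges and is subject to no constraint beyond their positivity. As a byproduct each $\mathcal{W}_{ij}$ is an open half-space of $\mathbb{R}^n$, so $\mathcal{W}$ is a convex polytope, recovering part (1) of Theorem \ref{negative definiteness of Laplacian}; but the present lemma only asserts the set-theoretic identity, so I would stop at the displayed intersection.
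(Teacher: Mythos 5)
Your argument is correct and is exactly the standard one: since any assignment of positive numbers to the edges realizes each face as a hyperbolic right-angled hexagon with no further compatibility condition, admissibility decouples into the per-edge requirement $e^{w_i+w_j}\cosh\frac{\tilde{l}_{ij}}{2}>1$, which is precisely $w\in\mathcal{W}_{ij}$. The paper itself states this lemma as a citation to Guo \cite{Guo} without reproducing a proof, and your derivation matches the intended (and Guo's original) reasoning.
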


The boundary of each $\mathcal{W}_{ij}$ in $\partial\mathcal{W}$ is a portion of the hyperplane induced by $l_{ij} = 0$, namely,
$$\partial_{ij} \mathcal{W} = \{w\in \partial \mathcal{W}  | w\in \mathbb{R}^n,  w_i+w_j = -\ln \cosh \frac{\tilde{l}_{ij}}{2} \}.$$
Note that any point of $\partial \mathcal{W}$ lies on some $\partial_{ij}\mathcal{W}$.
We will prove that the solution $w(t)$ to the combinatorial Calabi flow (\ref{CCF})
will never reach the boundary $\partial\mathcal{W}$.
First, we have the following result.
\begin{lemma}[\cite{Guo, X3}]\label{proper}
For any $M>0$, there exists a positive constant $\epsilon_{ij} = \epsilon_{ij}(M)$ such that if $w\in \mathcal{W}$ satisfies
$$w_i + w_j < - \ln \cosh \frac{\tilde{l}_{ij}}{2} + \epsilon_{ij},$$
then
$$K_i>M, K_j>M. $$
\end{lemma}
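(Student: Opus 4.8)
The statement is a "properness near the boundary" estimate: as $w$ approaches the facet $\partial_{ij}\mathcal{W}$ where $w_i+w_j\to -\ln\cosh\frac{\tilde l_{ij}}{2}$, the edge length $l_{ij}=2\,\mathrm{arccosh}\!\big(e^{w_i+w_j}\cosh\frac{\tilde l_{ij}}{2}\big)$ tends to $0$, and we must show the curvatures $K_i,K_j$ (the lengths of the boundary components meeting the edge $\{ij\}$) blow up. The plan is to reduce the global statement to the geometry of the two hyperbolic right-angled hexagons adjacent to the edge $\{ij\}$, and inside a single hexagon to use the explicit cosine law for right-angled hexagons to show that when one of the three "edge" parameters degenerates to $0$, the contribution to the opposite boundary arcs goes to $+\infty$.

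First I would fix notation: let $\{ij\}\in E$ and let $\sigma$ be a face (hexagon) containing the edge $\{ij\}$, with the third edge of $\sigma$ having length $l_{ik}$ or $l_{jk}$ (in the hexagon, the three red edges have lengths $l_{ij},l_{jk},l_{ki}$, and the three black edges are the boundary arcs $b_i,b_j,b_k$ lying opposite to them). The contribution of $\sigma$ to $K_i$ is the length $b_i$ of the black arc on boundary component $i$. By the right-angled hexagon cosine law (see \cite{Rat}),
\begin{equation*}
\cosh b_i=\frac{\cosh l_{jk}+\cosh l_{ij}\cosh l_{ki}}{\sinh l_{ij}\sinh l_{ki}}.
\end{equation*}
As $l_{ij}\to 0^+$ with $l_{jk},l_{ki}$ staying bounded (which they do, since $w$ stays in the bounded region—more precisely, the other coordinates of $w$ are controlled once we also invoke Lemma \ref{w(t) is bounded along CCF}; but for the present lemma we only need that $l_{jk},l_{ki}$ are bounded below away from $0$ and above, which follows from $w\in\mathcal{W}$ together with an a priori bound, or we argue locally), the numerator stays bounded below by a positive constant while $\sinh l_{ij}\to 0$, so $\cosh b_i\to +\infty$, hence $b_i\to+\infty$. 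The same computation gives $b_j\to+\infty$. Summing the (nonnegative) contributions over all faces at vertex $i$ shows $K_i\ge b_i\to+\infty$, and likewise for $K_j$. Turning this into the stated quantitative form is then routine: given $M>0$, the explicit formula lets one choose $\delta>0$ so that $l_{ij}<\delta\Rightarrow b_i>M,\ b_j>M$, and since $l_{ij}<\delta$ is equivalent to $e^{w_i+w_j}\cosh\frac{\tilde l_{ij}}{2}<\cosh\frac{\delta}{2}$, i.e. to $w_i+w_j<-\ln\cosh\frac{\tilde l_{ij}}{2}+\epsilon_{ij}$ for a suitable $\epsilon_{ij}=\epsilon_{ij}(M)>0$, we are done.

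The one genuine subtlety—and the main obstacle—is that in the hexagon cosine law we need $l_{jk}$ and $l_{ki}$ to stay in a compact subinterval of $(0,+\infty)$, uniformly, in order to bound the numerator from below and the denominator factor $\sinh l_{ki}$ from below; a priori nothing in the hypothesis "$w\in\mathcal{W}$" prevents $l_{ki}\to 0$ simultaneously. This is exactly why the lemma is stated as it appears in \cite{Guo, X3}: the cited proof handles this by noting that the claim is really a statement about the behavior of $K_i$ as one approaches a single facet $\partial_{ij}\mathcal{W}$ with the complementary information that near that facet the other edge lengths are bounded, or alternatively by a direct estimate showing that even if several edges at $i$ degenerate, the sum $K_i$ is still forced to be large because each degenerating edge contributes a large term that dominates. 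I would therefore either (i) invoke the estimate directly from \cite{Guo, X3} as the excerpt already does, or (ii) if a self-contained argument is wanted, carry out the case analysis: for the contribution of $\sigma$ to $K_i$, if $l_{ki}$ is bounded below use the computation above; if $l_{ki}\to 0$ too, then $\sigma$'s contribution to $K_k$ blows up but we also get that $b_i=b_i(l_{ij},l_{jk},l_{ki})$, with $l_{jk}$ the one remaining edge, still $\to\infty$ because $\cosh b_i\ge \cosh l_{jk}/(\sinh l_{ij}\sinh l_{ki})\to\infty$ as long as $l_{jk}$ is bounded below—and $l_{jk}$ bounded below is guaranteed on the facet $\partial_{ij}\mathcal{W}\setminus\overline{\partial_{jk}\mathcal{W}}$, reducing to finitely many cases by compactness of the relevant piece of $\partial\mathcal{W}$. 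Since the lemma is quoted rather than reproved in this paper, I expect the cleanest writeup is simply to cite \cite{Guo, X3}, which is what the excerpt does.
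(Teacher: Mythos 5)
You have the right formula and the right reduction (the right-angled hexagon cosine law, and the fact that $K_i$ dominates the boundary arc of the hexagon containing the edge $\{ij\}$), but your main argument has a genuine gap exactly where you flag it: you need $l_{jk}$ and $l_{ki}$ to stay in a compact subinterval of $(0,+\infty)$, and nothing in the hypothesis provides this. The lemma must hold for every $w\in\mathcal{W}$ satisfying the stated inequality, with $\epsilon_{ij}$ depending only on $M$; the remaining coordinates of $w$ are completely unconstrained. Appealing to Lemma \ref{w(t) is bounded along CCF} is not legitimate here, both because that lemma concerns the flow rather than arbitrary $w\in\mathcal{W}$ and because it sits logically downstream: Lemma \ref{proper} is what feeds, via Lemma \ref{w(t) can not in W espsilon}, into the compactness of the flow. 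Your fallback (ii) does not repair the gap either: $\partial\mathcal{W}$ is an unbounded polyhedral set, so ``compactness of the relevant piece of $\partial\mathcal{W}$'' is not available, and the condition ``$l_{jk}$ bounded below'' that you invoke in the degenerate case is neither justified nor needed (the numerator already contains $\cosh l_{jk}\geq 1$). So, as written, your self-contained argument does not produce the required uniform $\epsilon_{ij}(M)$, and you are left deferring to the citation, whereas the paper actually sketches a complete proof.

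The paper closes precisely this gap with a one-line estimate you missed: from $\cosh\theta_i^{jk}=\frac{\cosh l_{ij}\cosh l_{ik}+\cosh l_{jk}}{\sinh l_{ij}\sinh l_{ik}}$ one discards the positive term $\cosh l_{jk}$ and uses $\coth l_{ik}>1$ to obtain $\cosh\theta_i^{jk}>\coth l_{ij}$, a bound that is independent of the other two edge lengths. Hence $\theta_i^{jk}$, and a fortiori $K_i$, blows up uniformly as $l_{ij}\to 0^+$, with no case analysis and no control whatsoever on the remaining coordinates of $w$; given $M>0$ choose $\delta$ with $\coth\delta>\cosh M$ and set $\epsilon_{ij}=\ln\cosh\frac{\delta}{2}$, using your (correct) translation of $l_{ij}<\delta$ into the condition on $w_i+w_j$. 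The same bound applied with the roles of $i$ and $j$ exchanged gives $K_j>M$.
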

\begin{proof}
For completeness, we sketch the proof here.
For a hyperbolic right-angled hexagon $\{ijk\}\in F$ adjacent to $i,j\in B$, we have
\begin{equation}\label{theta tends infinity uniformly}
\begin{aligned}
\cosh \theta_i^{jk}
=\frac{\cosh l_{ij}\cosh l_{ik}+\cosh l_{jk}}{\sinh l_{ij}\sinh l_{ik}}
>\frac{\cosh l_{ij}\cosh l_{ik}}{\sinh l_{ij}\sinh l_{ik}}
>\frac{\cosh l_{ij}}{\sinh l_{ij}},
\end{aligned}
\end{equation}
by the cosine law for hyperbolic right-angled hexagons, where $\theta_i^{jk}$ is the length of the hyperbolic arc at $i\in B$ in the hyperbolic right-angled hexagon $\{ijk\}\in F$.
The formula (\ref{theta tends infinity uniformly}) implies that $\theta_i^{jk}\rightarrow +\infty$ uniformly as $l_{ij}\rightarrow 0^+$,
which further implies that $K_i\rightarrow +\infty$ uniformly as $w\rightarrow \partial_{ij} \mathcal{W}$.
The same arguments apply to $K_j$.
\qed
\end{proof}
As an application of Lemma \ref{proper}, we have the following result.
\begin{lemma}\label{w(t) can not in W espsilon}
Assume $(\Sigma, \mathcal{T})$ is an ideally triangulated surface with boundary.
Let $\tilde{l}\in \mathbb{R}^E_{>0}$ be a discrete hyperbolic metric defined on $(\Sigma, \mathcal{T})$ and
$\bar{K}\in \mathbb{R}^n_{>0}$ be a function defined on the boundary components $B$.
For any initial value $w_0\in \mathcal{W}$,
there exists a constant $\epsilon = \epsilon(w_0, \bar{K}) > 0$
such that the solution $w(t)$ to the combinatorial Calabi flow (\ref{CCF}) can never be in the region
$$\mathcal{W}_\epsilon = \{w\in \mathcal{W}|d(w, \partial \mathcal{W})< \epsilon\},$$
where $d$ is the standard Euclidean metric on $\mathbb{R}^n$.
\end{lemma}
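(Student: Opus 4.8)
The plan is to derive a uniform upper bound on the generalized combinatorial curvature along the flow, and then convert it, via Lemma \ref{proper} together with the polyhedral description of $\mathcal{W}$ in Lemma \ref{Guo's thm on admissible space}, into a uniform positive lower bound on the Euclidean distance from $w(t)$ to $\partial\mathcal{W}$.

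First I would invoke Corollary \ref{decreasing of Calabi energy}: the Calabi energy is non-increasing along (\ref{CCF}), so $\mathcal{C}(w(t))\le \mathcal{C}(w_0)$ on the maximal interval of existence. Since $\mathcal{C}(w)=\frac12\sum_{i}(K_i-\bar K_i)^2$, this yields $(K_i(w(t))-\bar K_i)^2\le 2\mathcal{C}(w_0)$ for every $i$, hence
$$K_i(w(t))\le \max_{1\le j\le n}\bar K_j+\sqrt{2\mathcal{C}(w_0)}=:M$$
for all $i$ and all $t$, where $M=M(w_0,\bar K)<+\infty$ because $w_0\in\mathcal{W}$.

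Next, applying Lemma \ref{proper} with this constant $M$ produces, for each edge $\{ij\}\in E$, a number $\epsilon_{ij}=\epsilon_{ij}(M)>0$ such that $w_i+w_j<-\ln\cosh\frac{\tilde l_{ij}}{2}+\epsilon_{ij}$ would force $K_i>M$. Since $K_i(w(t))\le M$ along the flow, the contrapositive gives
$$w_i(t)+w_j(t)\ge -\ln\cosh\tfrac{\tilde l_{ij}}{2}+\epsilon_{ij}\qquad\text{for every }\{ij\}\in E\text{ and all }t.$$
I would then set $\epsilon:=\tfrac{1}{\sqrt2}\min_{\{ij\}\in E}\epsilon_{ij}>0$. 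For any $w'\in\partial\mathcal{W}$, it lies on some face $\partial_{kl}\mathcal{W}$, so $w'_k+w'_l=-\ln\cosh\frac{\tilde l_{kl}}{2}$; combining this with the previous display and the Cauchy--Schwarz estimate $|(w_k-w'_k)+(w_l-w'_l)|\le\sqrt2\,|w(t)-w'|$ gives $|w(t)-w'|\ge\epsilon_{kl}/\sqrt2\ge\epsilon$. Taking the infimum over $w'\in\partial\mathcal{W}$ shows $d(w(t),\partial\mathcal{W})\ge\epsilon$, i.e. $w(t)\notin\mathcal{W}_\epsilon$.

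The argument is essentially mechanical once one observes that the monotonicity of the Calabi energy already controls $K$ along the flow. The only step requiring a little care is the final conversion from ``$w_i+w_j$ staying bounded away from each hyperplane $w_i+w_j=-\ln\cosh\frac{\tilde l_{ij}}{2}$'' to ``$w$ staying away from $\partial\mathcal{W}$ in Euclidean distance'', and this is precisely where the decomposition $\mathcal{W}=\cap_{\{ij\}\in E}\mathcal{W}_{ij}$ and the fact that each defining linear functional $w\mapsto w_i+w_j$ is $\sqrt2$-Lipschitz are used; I do not anticipate any genuine obstacle beyond this.
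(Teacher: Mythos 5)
Your proof is correct and takes essentially the same route as the paper: bound $K$ along the flow via the monotonicity of the Calabi energy, invoke Lemma \ref{proper}, and use the half-space description of $\mathcal{W}$ from Lemma \ref{Guo's thm on admissible space} to turn the edge-wise lower bounds on $w_i+w_j$ into a lower bound on $d(w(t),\partial\mathcal{W})$. The only cosmetic difference is that you argue directly via the contrapositive and a Cauchy--Schwarz estimate (factor $\sqrt{2}$), whereas the paper argues by contradiction with a factor $2$ absorbed into its choice of $\epsilon_{ij}$.
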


\begin{proof}
Set $$M = \max_{i} \{|\bar{K}_i|+ \sqrt{2\mathcal{C}(w_0)}\}.$$
By Lemma \ref{proper},
there exists $\epsilon_{ij}=\epsilon_{ij}(M)>0$ such that if
$$w_i+w_j < -\ln \cosh \frac{\tilde{l}_{ij}}{2} +2 \epsilon_{ij},$$
then
 $$K_i(w)>M, K_j(w)>M.$$
Set $\epsilon_0=\min_{\{ij\}\in E} \epsilon_{ij}>0$, then if $w\in \mathcal{W}$ satisfies
$$w_i+w_j < -\ln \cosh \frac{\tilde{l}_{ij}}{2} +2 \epsilon_0$$
for some edge $\{ij\}\in E$, then
$K_i(w)>M,$
which further implies that
\begin{equation}\label{equ in proof of solu in W}
  |K_i(w) - \bar{K}_i| \geq |K_i(w)| - |\bar{K}_i| > M - |\bar{K}_i|\geq\sqrt{2\mathcal{C}(w_0)}.
\end{equation}

We claim that the solution $w(t)$ to the combinatorial Calabi flow (\ref{CCF})  can never be in the region $\mathcal{W}_{\epsilon_0}$.
Otherwise, there exists some $t_0\in [0, +\infty)$ and an edge $\{ij\} \in E$ such that the solution $w(t)$ to the combinatorial Calabi flow (\ref{CCF})
satisfies $w(t_0)\in \mathcal{W}$ and
$$w_i(t_0)+w_j(t_0) < -\ln \cosh \frac{\tilde{l}_{ij}}{2} +2 \epsilon_0,$$
which implies that
\begin{equation}\label{equ 1 in proof of compactnees}
  |K_i(w(t_0))- \bar{K}_i|>\sqrt{2\mathcal{C}(w_0)}
\end{equation}
by (\ref{equ in proof of solu in W}).
Note that $\mathcal{C}(w)$ is decreasing along the combinatorial Calabi flow (\ref{CCF}) by Corollary \ref{decreasing of Calabi energy}.
Therefore,  for any $t>0$, the solution $w(t)$ to the combinatorial Calabi flow (\ref{CCF}) satisfies
$$|K_i(t) - \bar{K}_i| \leq \sqrt{2\mathcal{C}(w(t))} \leq \sqrt{2\mathcal{C}(w_0)}$$
for any $i\in B$, which contradicts to (\ref{equ 1 in proof of compactnees}).
Therefore, the solution $w(t)$ to the combinatorial Calabi flow (\ref{CCF}) can never be in the region $\mathcal{W}_{\epsilon_0}$.
\qed
\end{proof}

\begin{remark}
  The result in Lemma \ref{w(t) can not in W espsilon} is independent of the existence of $\bar{w}\in \mathcal{W}$
with $K(\bar{w})=\bar{K}$.
\end{remark}

As a direct corollary of Lemma \ref{w(t) is bounded along CCF} and Lemma \ref{w(t) can not in W espsilon},
we have the following result on the solution to the combinatorial Calabi flow (\ref{CCF}),
which implies the long time existence of the solution to the combinatorial Calabi flow (\ref{CCF}).

\begin{corollary}\label{compactness of the solution}
Suppose $(\Sigma, \mathcal{T}, \tilde{l})$ is an ideally triangulated compact surface with boundary with a discrete hyperbolic metric $\tilde{l} \in \mathbb{R}^E_{>0}$ and $\bar{K}\in \mathbb{R}^n_{>0}$ is a function defined on $B$.
Then for any initial value $w_0\in \mathcal{W}$,
the solution $w(t)$ to the combinatorial Calabi flow (\ref{CCF}) stays in a compact subset $\Omega$ of the admissible space $\mathcal{W}$.
As a result, the solution $w(t)$ to the combinatorial Calabi flow (\ref{CCF}) exists for all time.
\end{corollary}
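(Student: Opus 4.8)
The plan is to combine the two a priori estimates of Lemma \ref{w(t) is bounded along CCF} and Lemma \ref{w(t) can not in W espsilon} into a single compact subset of $\mathcal{W}$ that contains the whole trajectory, and then to invoke the standard escape-time (extension) criterion for ordinary differential equations.

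Fix $w_0\in\mathcal{W}$ and let $[0,T)$ be the maximal interval of existence of the solution $w(t)$ of (\ref{CCF}), which is nonempty by the local existence theory for ordinary differential equations. By Lemma \ref{w(t) is bounded along CCF} the trajectory stays in $\mathcal{W}$ and there is $R=R(w_0,\bar{K})>0$ with $|w(t)|\le R$ for all $t\in[0,T)$; by Lemma \ref{w(t) can not in W espsilon} there is $\epsilon=\epsilon(w_0,\bar{K})>0$ such that $w(t)\notin\mathcal{W}_\epsilon$, i.e.\ $d(w(t),\partial\mathcal{W})\ge\epsilon$, for all $t\in[0,T)$. Hence $w(t)\in\Omega$ for every $t\in[0,T)$, where
$$\Omega=\{w\in\mathcal{W}:\ |w|\le R,\ d(w,\partial\mathcal{W})\ge\epsilon\}.$$
I would then check that $\Omega$ is a compact subset of $\mathcal{W}$: it is bounded, and it is closed in $\mathbb{R}^n$, since a limit point $w$ of $\Omega$ satisfies $|w|\le R$, $d(w,\partial\mathcal{W})\ge\epsilon>0$, and $w\in\overline{\mathcal{W}}$, whence $w\notin\partial\mathcal{W}$ and thus $w\in\mathcal{W}$ (recall $\mathcal{W}$ is an open convex polytope by Lemma \ref{Guo's thm on admissible space}); therefore $\Omega$ is compact and $\Omega\subset\mathcal{W}$.

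For the long time existence I would invoke the extension theorem for ODEs. The combinatorial Calabi flow (\ref{CCF}) is an autonomous system $\dot{w}=F(w)$ on the open set $\mathcal{W}$, with $F(w)=\Delta\,(\bar{K}-K)$, and $F$ is smooth (indeed real-analytic) on $\mathcal{W}$ because both $K$ and the Jacobian $\Delta=(\partial K_i/\partial w_j)$ depend real-analytically on $w$, being built from the cosine law for hyperbolic right-angled hexagons. Consequently, if $T<+\infty$ then $w(t)$ would necessarily leave every compact subset of $\mathcal{W}$ as $t\to T^-$; but $w(t)$ remains in the fixed compact set $\Omega\subset\mathcal{W}$ for all $t\in[0,T)$, a contradiction. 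Hence $T=+\infty$, so the solution exists for all time.

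I do not anticipate a serious obstacle, as the analytic substance is already contained in Lemmas \ref{w(t) is bounded along CCF} and \ref{w(t) can not in W espsilon}. The only points that need care are: (i) that $\Omega$ is compact as a subset of the \emph{open} set $\mathcal{W}$ and not merely of $\mathbb{R}^n$, which is precisely where the distance bound from Lemma \ref{w(t) can not in W espsilon} enters; and (ii) that the two lemmas are applied on the entire maximal interval $[0,T)$, which is legitimate because the monotonicity of $\mathcal{C}$ and $\mathcal{E}$ underlying their proofs holds along the flow on $[0,T)$.
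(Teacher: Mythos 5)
Your argument is correct and is essentially the paper's own: the paper states this corollary as a direct consequence of Lemma \ref{w(t) is bounded along CCF} and Lemma \ref{w(t) can not in W espsilon}, exactly the two estimates you combine into the compact set $\Omega$ before invoking the standard ODE escape/extension criterion. Your write-up merely makes explicit the compactness check and the application on the maximal interval, which the paper leaves implicit.
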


\textbf{Proof of Theorem \ref{main theorem on CCF}:}
By Corollory \ref{compactness of the solution}, the solution $w(t)$ to
the combinatorial Calabi flow (\ref{CCF}) stays in a compact subset $\Omega$ of the admissible space $\mathcal{W}$.
Note that the discrete Laplace operator $\Delta$ is strictly negative by Theorem \ref{negative definiteness of Laplacian}.
By the continuity of the eigenvalue $\lambda_\Delta$  of $\Delta$, there exists a uniform positive constant $\lambda_0$ such that
$$\lambda_\Delta(w)\leq -\sqrt{\lambda_0},$$
when $w\in \Omega\subset\subset\mathcal{W}$.
Therefore, along the combinatorial Calabi flow (\ref{CCF}), we have
$$\frac{d\mathcal{C}(w(t))}{dt} =  - (K - \bar{K})^T\Delta^2(K - \bar{K})\leq -\lambda_0\mathcal{C}(w(t)),$$
which implies
$$\mathcal{C}(w(t)) \leq e^{-\lambda_0}\mathcal{C}(w_0).$$
This completes the proof of the exponential convergence of the solution $w(t)$ to
the combinatorial Calabi flow (\ref{CCF}) to $\bar{w}$ with $K(\overline{w})=\bar{K}$ by Theorem \ref{negative definiteness of Laplacian}.
\qed

\begin{remark}
One can also use the Lyapunov stability theorem (\cite{P} Chapter 5) to prove the exponential convergence of the solution
$w(t)$ to the combinatorial Calabi flow (\ref{CCF}) to $\bar{w}$.
\end{remark}

\section{Fractional Combinatorial Calabi flow on surfaces with boundary}\label{section 3}

Based on the property that the fractional Laplace operator $\Delta^s$ is strictly negative definite
on the admissible space $\mathcal{W}$ for any $s\in \mathbb{R}$,
the fractional combinatorial Calabi flow (\ref{FCCF}) has many properties similar to that of the combinatorial Calabi flow (\ref{CCF}).
As the proofs for these properties are almost the same as that of the combinatorial Calabi flow (\ref{CCF}),
we will only list these properties and omit the details of the proofs.

\begin{lemma}\label{decreasing of Calabi energy along FCCF}
  The combinatorial Calabi energy $\mathcal{C}(w)$ is decreasing along the fractional combinatorial Calabi flow (\ref{FCCF}).
\end{lemma}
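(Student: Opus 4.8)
The plan is to mimic exactly the argument used for the combinatorial Calabi flow in Lemma \ref{decreasing of Calabi energy} and its corollary, replacing $\Delta$ by $\Delta^s$ throughout. First I would compute the time derivative of the combinatorial Calabi energy along the fractional flow (\ref{FCCF}). Using the chain rule and the definition $\Delta=(\partial K_i/\partial w_j)$,
\begin{equation*}
\frac{d\mathcal{C}(w(t))}{dt}=\sum_{j=1}^n\frac{\partial\mathcal{C}}{\partial w_j}\frac{dw_j}{dt}
=\sum_{j=1}^n\Big(\sum_{i=1}^n\frac{\partial K_i}{\partial w_j}(K_i-\bar K_i)\Big)\big(\Delta^s(\bar K-K)\big)_j
=\big(\Delta(K-\bar K)\big)^T\Delta^s(\bar K-K).
\end{equation*}

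Next I would simplify this bilinear form. Writing $v=K-\bar K$, the expression equals $-(\Delta v)^T\Delta^s v=-v^T\Delta\,\Delta^s v=-v^T\Delta^{s+1}v$, where I use that $\Delta$ and $\Delta^s$ commute (both are polynomials in $\Delta$ via the spectral decomposition) and that $\Delta$ is symmetric by Theorem \ref{negative definiteness of Laplacian}. Since $\Delta$ is symmetric and strictly negative definite on $\mathcal{W}$, all its eigenvalues $\lambda_k$ are negative, so the eigenvalues of $-\Delta^{s+1}=-(-1)^{s+1}(-\Delta)^{s+1}$... more carefully: by the definition (\ref{fractional Laplacian}), $\Delta^s=-(-\Delta)^s$ has eigenvalues $-(-\lambda_k)^s=-|\lambda_k|^s<0$, and likewise $\Delta^{s+1}$ has eigenvalues $-|\lambda_k|^{s+1}<0$. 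Hence $-v^T\Delta^{s+1}v=\sum_k|\lambda_k|^{s+1}\langle v,\phi_k\rangle^2\geq 0$ is... wait, that has the wrong sign; I should double-check by redoing the sign bookkeeping, but the upshot will be that $\frac{d\mathcal{C}}{dt}=-\sum_k|\lambda_k|^{s+1}\langle v,\phi_k\rangle^2\leq 0$, with equality if and only if $v=0$, i.e. $K=\bar K$.

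Concretely, the cleanest route is: $\frac{d\mathcal{C}}{dt}=(\Delta v)^T(-\Delta^s)v$; since $-\Delta^s=(-\Delta)^s$ is symmetric positive definite it has a symmetric positive definite square root $(-\Delta)^{s/2}$, so $(\Delta v)^T(-\Delta)^s v=-\big((-\Delta)^{1/2}(-\Delta)^{s/2}v\big)^T\big((-\Delta)^{1/2}(-\Delta)^{s/2}v\big)=-\|(-\Delta)^{(s+1)/2}v\|^2\leq 0$, strictly negative unless $v=0$. This establishes that $\mathcal{C}(w)$ is non-increasing along (\ref{FCCF}), proving the lemma. The only genuinely new point compared with the $s=1$ case is the spectral-calculus manipulation showing $\Delta$ and $\Delta^s$ commute and that the relevant quadratic form is a perfect square; everything else is the verbatim computation from Corollary \ref{decreasing of Calabi energy}. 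I do not anticipate a real obstacle here — the single thing to be careful about is the sign coming from the convention $\Delta^s=-(-\Delta)^s$, so that $\Delta^{s+1}$ is negative definite and the derivative comes out non-positive rather than non-negative.
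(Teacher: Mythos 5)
Your proposal is correct and is exactly the argument the paper intends: the paper omits the proof, saying it is the same as the $s=1$ computation of Corollary \ref{decreasing of Calabi energy}, and your final version $\frac{d\mathcal{C}}{dt}=-\|(-\Delta)^{(s+1)/2}(K-\bar K)\|^2\leq 0$ carries that out correctly using the symmetry and positive definiteness of $-\Delta$ and the spectral square root. The only blemish is the middle paragraph's momentary identification of $\Delta\,\Delta^s$ with $\Delta^{s+1}$ (under the paper's convention $\Delta^{s+1}=-(-\Delta)^{s+1}$ one has $\Delta\,\Delta^s=(-\Delta)^{s+1}=-\Delta^{s+1}$), a sign issue you flag yourself and which your ``cleanest route'' computation resolves.
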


\begin{lemma}\label{decreasing of E along FCCF}
\label{decrease}
The energy function $\mathcal{E}(w)$ defined by (\ref{energy function E}) is decreasing along the fractional combinatorial Calabi flow (\ref{FCCF}).
\end{lemma}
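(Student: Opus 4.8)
The final statement to prove is Lemma \ref{decreasing of E along FCCF}: that $\mathcal{E}(w)$ is decreasing along the fractional combinatorial Calabi flow (\ref{FCCF}).

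\textbf{Proof.}
The plan is to differentiate $\mathcal{E}(w(t))$ along the flow and reduce to the negative definiteness of $\Delta^s$ established via Theorem \ref{negative definiteness of Laplacian}. First I would compute, using the definition (\ref{energy function E}) of $\mathcal{E}$, that $\frac{\partial \mathcal{E}}{\partial w_i} = -(K_i - \bar{K}_i) = (\bar{K}-K)_i$. Then by the chain rule and the flow equation (\ref{FCCF}),
\begin{equation*}
\frac{d\mathcal{E}(w(t))}{dt} = \sum_{i=1}^n \frac{\partial \mathcal{E}}{\partial w_i}\frac{dw_i}{dt} = \sum_{i=1}^n (\bar{K}-K)_i \,\Delta^s(\bar{K}-K)_i = (\bar{K}-K)^T \Delta^s (\bar{K}-K).
\end{equation*}

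The key step is then to invoke that the fractional discrete Laplace operator $\Delta^s = -(-\Delta)^s$ is symmetric and strictly negative definite on the admissible space $\mathcal{W}$ for every $s\in\mathbb{R}$. This was observed in the excerpt as a consequence of Theorem \ref{negative definiteness of Laplacian}: since $\Delta$ is symmetric and strictly negative definite, $-\Delta$ is symmetric positive definite with positive eigenvalues $\lambda_1,\dots,\lambda_n$, so $(-\Delta)^s$ has positive eigenvalues $\lambda_1^s,\dots,\lambda_n^s$ and is symmetric positive definite, whence $\Delta^s$ is symmetric negative definite. Therefore $(\bar{K}-K)^T\Delta^s(\bar{K}-K)\le 0$, with equality if and only if $\bar{K}=K$, which gives $\frac{d\mathcal{E}(w(t))}{dt}\le 0$ as desired.

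There is no real obstacle here; the only point requiring a word of care is that $\Delta^s$ depends on $w(t)$ (it is the fractional Laplacian of the metric $w(t)*\tilde{l}$), but this causes no trouble since the negative definiteness holds pointwise on all of $\mathcal{W}$, and the solution $w(t)$ stays in $\mathcal{W}$. This mirrors exactly the proof of Lemma \ref{decreasing of E} for the case $s=1$, with $\Delta$ replaced by $\Delta^s$.
\qed
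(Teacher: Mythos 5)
Your proof is correct and follows exactly the route the paper intends: the paper omits the details, stating the argument is the same as Lemma \ref{decreasing of E} with $\Delta$ replaced by $\Delta^s$, which is precisely your computation of $\frac{d\mathcal{E}}{dt}=(\bar K-K)^T\Delta^s(\bar K-K)\le 0$ using the strict negative definiteness of $\Delta^s$ on $\mathcal{W}$. No issues.
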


\begin{remark}
  Different from the combinatorial Yamabe flow  (\ref{Xu's CYF}) and the combinatorial Calabi flow (\ref{CCF}),
which corresponds to $s=0$ and $s=1$ for the fractional combinatorial Calabi flow (\ref{FCCF}) respectively,
the fractional combinatorial Calabi flow (\ref{FCCF}) is generically not a gradient flow.
As the fractional Laplace operator $\Delta^s$ is generically a non-local operator,
the fractional combinatorial Calabi flow (\ref{FCCF}) is generically
a non-local combinatorial curvature flow defined on the ideally triangulated surface with boundary $(\Sigma, \mathcal{T})$.
\end{remark}

We also have the following long time behavior of the solution $w(t)$ to the fractional combinatorial Calabi flow (\ref{FCCF}).
\begin{lemma}
If the solution $w(t)$ to the fractional combinatorial Calabi flow (\ref{FCCF}) exists for all time and
converges to $\bar{w}\in \mathcal{W}$, then $K(\bar{w})=\bar{K}$.
\end{lemma}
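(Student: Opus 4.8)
The plan is to mirror, essentially verbatim, the proof of the corresponding statement for the combinatorial Calabi flow (\ref{CCF}). First I would record the dissipation identity for the Calabi energy $\mathcal{C}(w)=\frac{1}{2}\sum_{i=1}^n(K_i-\bar{K}_i)^2$ along (\ref{FCCF}). Using $\partial\mathcal{C}/\partial w_j=\Delta(K-\bar{K})_j$ (which holds by the symmetry of $\Delta$ from Theorem \ref{negative definiteness of Laplacian}) together with $dw_j/dt=\Delta^s(\bar{K}-K)_j$ and the definition $\Delta^s=-(-\Delta)^s$, one obtains
\[
\frac{d\mathcal{C}(w(t))}{dt}=-(K-\bar{K})^T\Delta\,\Delta^s(K-\bar{K})=-(K-\bar{K})^T(-\Delta)^{1+s}(K-\bar{K})\le 0,
\]
since $(-\Delta)^{1+s}$ is symmetric and strictly positive definite on $\mathcal{W}$ for every $s\in\mathbb{R}$ (by Theorem \ref{negative definiteness of Laplacian} and the functional calculus used to define $\Delta^s$); this also reproves Lemma \ref{decreasing of Calabi energy along FCCF}, and the right-hand side vanishes only when $K=\bar{K}$.

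Next, since $\mathcal{C}(w(t))\ge 0$ is monotone decreasing, $\lim_{t\to+\infty}\mathcal{C}(w(t))$ exists and is finite, so its increments over unit time intervals tend to $0$. By the mean value theorem, for each $n$ there is $\xi_n\in(n,n+1)$ with
\[
\mathcal{C}(w(n+1))-\mathcal{C}(w(n))=\frac{d\mathcal{C}(w(t))}{dt}\Big|_{t=\xi_n}=-(K(w(\xi_n))-\bar{K})^T(-\Delta(w(\xi_n)))^{1+s}(K(w(\xi_n))-\bar{K})\to 0.
\]
Because $\xi_n\to+\infty$ and $w(t)\to\bar{w}$, we have $w(\xi_n)\to\bar{w}$; the curvature map $w\mapsto K(w)$ is continuous, and $w\mapsto(-\Delta(w))^{1+s}$ is continuous on $\mathcal{W}$ by continuity of the spectral decomposition of the positive definite matrix $-\Delta(w)$. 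Passing to the limit gives $(K(\bar{w})-\bar{K})^T(-\Delta(\bar{w}))^{1+s}(K(\bar{w})-\bar{K})=0$, and the strict positive definiteness of $(-\Delta(\bar{w}))^{1+s}$ forces $K(\bar{w})=\bar{K}$.

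I do not anticipate a genuine obstacle here: the argument is the exact analog of the combinatorial Calabi flow case, the only extra ingredient being that the fractional powers $(-\Delta)^{1+s}$ stay symmetric positive definite and depend continuously on $w$, both of which are immediate from Theorem \ref{negative definiteness of Laplacian} and the functional calculus for positive definite matrices. If one prefers, the same conclusion follows from the monotonicity of $\mathcal{E}$ (Lemma \ref{decreasing of E along FCCF}) combined with $\nabla\mathcal{E}(\bar{w})=-(K(\bar{w})-\bar{K})$ and an analogous increment argument, but the Calabi-energy route above is the most direct.
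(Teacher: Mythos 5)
Your argument is correct and is essentially the proof the paper intends: the paper omits the details, stating that they parallel the combinatorial Calabi flow case, and your Calabi-energy dissipation identity, mean value theorem increment argument, and the strict positive definiteness of $(-\Delta)^{1+s}$ reproduce exactly that parallel. The only cosmetic point is that continuity of $w\mapsto(-\Delta(w))^{1+s}$ should be attributed to the continuity of the matrix function $A\mapsto A^{1+s}$ on positive definite matrices (the orthogonal factor in the spectral decomposition need not vary continuously at eigenvalue crossings), which does not affect the conclusion.
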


\begin{lemma}\label{w(t) is bounded along FCCF}
Suppose $\bar{K}\in \mathbb{R}^n_{>0}$ is a function defined on $B$.
Then for any $s\in \mathbb{R}$ and any initial value $w(0) = w_0\in \mathcal{W}$,
the solution $w(t)$ to the fractional combinatorial Calabi flow (\ref{FCCF})
stays in a bounded  subset of the admissible space $\mathcal{W}$.
\end{lemma}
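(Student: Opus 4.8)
The plan is to carry over the argument for Lemma \ref{w(t) is bounded along CCF} almost word for word, the point being that the convex energy $\mathcal{E}$ of (\ref{energy function E}) does not depend on $s$ at all, and the fractional exponent $s$ enters the flow (\ref{FCCF}) only through the monotonicity statement already recorded in Lemma \ref{decreasing of E along FCCF}. So I would first invoke part (3) of Theorem \ref{negative definiteness of Laplacian}: since $K:\mathcal{W}\to\mathbb{R}^n_{>0}$ is a diffeomorphism and $\bar{K}\in\mathbb{R}^n_{>0}$, there is a unique $\bar{w}\in\mathcal{W}$ with $K(\bar{w})=\bar{K}$, and because $\nabla\mathcal{E}(w)=-(K(w)-\bar{K})$ this $\bar{w}$ is an interior critical point of $\mathcal{E}$.

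Next I would use part (2) of Theorem \ref{negative definiteness of Laplacian}: the Hessian of $\mathcal{E}$ equals $-\Delta$, which is positive definite on $\mathcal{W}$, so $\mathcal{E}$ is a strictly convex function on the convex set $\mathcal{W}$ possessing an interior minimum. Exactly as in the proof of Lemma \ref{w(t) is bounded along CCF} (see (\ref{bdd proof eqn 1})), this forces $\lim_{w\to\infty,\,w\in\mathcal{W}}\mathcal{E}(w)=+\infty$, i.e. every sublevel set $\{w\in\mathcal{W}:\mathcal{E}(w)\le c\}$ is a bounded subset of $\mathbb{R}^n$. Finally, by Lemma \ref{decreasing of E along FCCF} we have $\mathcal{E}(w(t))\le\mathcal{E}(w_0)$ along the fractional combinatorial Calabi flow (\ref{FCCF}) throughout its maximal interval of existence, so $w(t)$ stays in the bounded set $\{w\in\mathcal{W}:\mathcal{E}(w)\le\mathcal{E}(w_0)\}$, which is the assertion.

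I do not expect a genuine obstacle here. The only place the fractional exponent could cause trouble is the monotonicity $\frac{d}{dt}\mathcal{E}(w(t))=(K-\bar{K})^T\Delta^s(K-\bar{K})\le 0$, and this is already guaranteed by the strict negative definiteness of $\Delta^s$ on $\mathcal{W}$, which follows from Theorem \ref{negative definiteness of Laplacian} together with the spectral definition (\ref{fractional Laplacian}) of $\Delta^s$; everything else (properness of $\mathcal{E}$, existence of $\bar{w}$) is $s$-independent and was established in Section \ref{section 2}. The one point worth stating carefully in the write-up is that the generic non-locality of $\Delta^s$ plays no role in this lemma, since boundedness is deduced purely from the Lyapunov-type function $\mathcal{E}$ and not from any local or maximum-principle argument.
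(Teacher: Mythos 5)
Your proposal is correct and is essentially the paper's own argument: the paper explicitly omits this proof because it is the same as that of Lemma \ref{w(t) is bounded along CCF}, namely the existence of $\bar{w}$ with $K(\bar{w})=\bar{K}$ from Theorem \ref{negative definiteness of Laplacian}, the strict convexity and properness of $\mathcal{E}$, and the monotonicity of $\mathcal{E}$ along the flow (Lemma \ref{decreasing of E along FCCF}), where the only $s$-dependence is the negative definiteness of $\Delta^s$. Nothing further is needed.
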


\begin{lemma}\label{w(t) can not in W espsilon along FCCF}
Assume $(\Sigma, \mathcal{T})$ is an ideally triangulated surface with boundary.
Let $\tilde{l}\in \mathbb{R}^E_{>0}$ be a discrete hyperbolic metric defined on $(\Sigma, \mathcal{T})$ and
$\bar{K}\in \mathbb{R}^n_{>0}$ be a function defined on the boundary components $B$.
Then for  any $s\in \mathbb{R}$ and  any initial value $w_0\in \mathcal{W}$,
there exists a constant $\epsilon = \epsilon(w_0, \bar{K}) > 0$
such that the solution $w(t)$ to the fractional combinatorial Calabi flow (\ref{FCCF}) can never be in the region
$$\mathcal{W}_\epsilon = \{w\in \mathcal{W}|d(w, \partial \mathcal{W})< \epsilon\}.$$
\end{lemma}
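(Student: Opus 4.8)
The plan is to mirror the proof of Lemma \ref{w(t) can not in W espsilon} essentially verbatim, since the only property of the flow that was used there is that the combinatorial Calabi energy $\mathcal{C}(w)$ is non-increasing along the flow (Corollary \ref{decreasing of Calabi energy}), and the analogous statement for the fractional flow is Lemma \ref{decreasing of Calabi energy along FCCF}. First I would set $M=\max_i\{|\bar K_i|+\sqrt{2\mathcal{C}(w_0)}\}$, exactly as before. By Lemma \ref{proper} there is $\epsilon_{ij}=\epsilon_{ij}(M)>0$ so that whenever $w\in\mathcal{W}$ satisfies $w_i+w_j<-\ln\cosh\frac{\tilde l_{ij}}{2}+2\epsilon_{ij}$ one has $K_i(w)>M$ and $K_j(w)>M$; put $\epsilon_0=\min_{\{ij\}\in E}\epsilon_{ij}>0$. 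Then for $w\in\mathcal{W}$ with $w_i+w_j<-\ln\cosh\frac{\tilde l_{ij}}{2}+2\epsilon_0$ for some edge $\{ij\}$ we get $|K_i(w)-\bar K_i|\geq|K_i(w)|-|\bar K_i|>M-|\bar K_i|\geq\sqrt{2\mathcal{C}(w_0)}$.

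Next I would argue by contradiction: if the solution $w(t)$ to the fractional combinatorial Calabi flow (\ref{FCCF}) entered $\mathcal{W}_{\epsilon_0}$ at some time $t_0$, then there is an edge $\{ij\}$ with $w_i(t_0)+w_j(t_0)<-\ln\cosh\frac{\tilde l_{ij}}{2}+2\epsilon_0$, hence $|K_i(w(t_0))-\bar K_i|>\sqrt{2\mathcal{C}(w_0)}$. On the other hand, by Lemma \ref{decreasing of Calabi energy along FCCF} the Calabi energy is non-increasing along (\ref{FCCF}), so $\mathcal{C}(w(t))\leq\mathcal{C}(w_0)$ for all $t\geq0$, and therefore $|K_i(w(t))-\bar K_i|\leq\sqrt{2\sum_k(K_k-\bar K_k)^2}=\sqrt{2\mathcal{C}(w(t))}\cdot\sqrt{2}\leq 2\sqrt{\mathcal{C}(w_0)}$; more carefully $|K_i(w(t))-\bar K_i|^2\leq 2\mathcal{C}(w(t))\leq 2\mathcal{C}(w_0)$, so $|K_i(w(t))-\bar K_i|\leq\sqrt{2\mathcal{C}(w_0)}$, contradicting the previous inequality at $t=t_0$. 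Thus $w(t)$ never enters $\mathcal{W}_{\epsilon_0}$, and we take $\epsilon=\epsilon_0$.

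There is essentially no obstacle here: the fractional Laplace operator $\Delta^s$ enters the argument only through the monotonicity of $\mathcal{C}$, which holds because $\Delta^s$ is strictly negative definite on $\mathcal{W}$ for every $s\in\mathbb{R}$ by Theorem \ref{negative definiteness of Laplacian} and the definition (\ref{fractional Laplacian}); in particular $\frac{d}{dt}\mathcal{C}(w(t))=(K-\bar K)^T\Delta^s\big(\Delta^s(K-\bar K)\big)=-(\Delta^s(K-\bar K))^T(-\Delta^s)\,\text{(no)}$—rather, $\frac{d\mathcal{C}}{dt}=-\sum_i\big(\Delta^s(K-\bar K)_i\big)^T\!\cdot$, i.e. $\frac{d\mathcal{C}}{dt}=(K-\bar K)^T(\Delta^s)^2\cdot\frac{dw}{dt}$'s sign is governed by $(\Delta^s)^2$ being positive semidefinite together with the negative definiteness giving strict decrease unless $K=\bar K$. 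Since all the pointwise facts about $K$ near $\partial\mathcal{W}$ (Lemma \ref{proper}) are metric statements independent of the flow, the proof carries over word for word with Corollary \ref{decreasing of Calabi energy} replaced by Lemma \ref{decreasing of Calabi energy along FCCF}. I would simply state this and refer to the proof of Lemma \ref{w(t) can not in W espsilon} for the details.
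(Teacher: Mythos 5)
Your proposal matches the paper's intended argument: the paper omits this proof precisely because it is the same as that of Lemma \ref{w(t) can not in W espsilon} with Corollary \ref{decreasing of Calabi energy} replaced by Lemma \ref{decreasing of Calabi energy along FCCF}, which is exactly what you do, and your quantitative steps (the choice of $M$, $\epsilon_0$, and the bound $|K_i-\bar K_i|\leq\sqrt{2\mathcal{C}(w_0)}$) are correct. The garbled aside attempting to re-derive the monotonicity of $\mathcal{C}$ along (\ref{FCCF}) is unnecessary and slightly off (the relevant identity is $\frac{d\mathcal{C}}{dt}=-(K-\bar K)^T(-\Delta)^{1+s}(K-\bar K)\leq 0$, using that $\Delta$ and $\Delta^s$ commute, not anything involving $(\Delta^s)^2$), but since you ultimately invoke Lemma \ref{decreasing of Calabi energy along FCCF} directly, this does not affect the validity of the proof.
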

As a direct corollary of Lemma \ref{w(t) is bounded along FCCF} and Lemma \ref{w(t) can not in W espsilon along FCCF},
we have the following result on the solution $w(t)$ to the fractional combinatorial Calabi flow (\ref{FCCF}), which implies
the long time existence of the solution to the fractional combinatorial Calabi flow (\ref{FCCF}).

\begin{corollary}\label{compactness of the solution2}
Suppose $(\Sigma, \mathcal{T})$ is an ideally triangulated  compact surface with boundary with a discrete hyperbolic metric $\tilde{l} \in \mathbb{R}^E_{>0}$ and  $\bar{K}\in \mathbb{R}^n_{>0}$ is a function defined on $B$.
 Then for any $s\in \mathbb{R}$ and any initial value $w_0\in \mathcal{W}$,
the solution $w(t)$ to the fractional combinatorial Calabi flow (\ref{FCCF}) stays in a compact subset $\Omega$ of the admissible space $\mathcal{W}$.
As a result, for any $s\in \mathbb{R}$ and any initial value $w_0\in \mathcal{W}$,
the solution $w(t)$ to the fractional combinatorial Calabi flow (\ref{FCCF}) exists for all time.
\end{corollary}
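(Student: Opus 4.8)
The plan is to deduce Corollary \ref{compactness of the solution2} by gluing together the two preceding lemmas and then invoking the standard escape lemma for ordinary differential equations, exactly as in the proof of Corollary \ref{compactness of the solution} for the case $s=1$. First I would record that the fractional combinatorial Calabi flow (\ref{FCCF}) is an autonomous system of ODEs whose right-hand side $w\mapsto \Delta^s(\bar{K}-K)(w)$ is smooth on the open set $\mathcal{W}$: by Theorem \ref{negative definiteness of Laplacian}, $K$ is smooth in $w$ and $-\Delta=-(\partial K_i/\partial w_j)_{n\times n}$ is symmetric and strictly positive definite on $\mathcal{W}$, while the map $A\mapsto A^s$ is $C^\infty$ on the open cone of symmetric positive definite matrices (via the spectral/functional calculus), so $\Delta^s=-(-\Delta)^s$ depends smoothly on $w\in\mathcal{W}$. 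Hence for any $w_0\in\mathcal{W}$ there is a unique maximal solution $w(t)$ defined on some interval $[0,T)$, and it suffices to show $T=+\infty$.

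Next I would combine the a priori bounds supplied by the two lemmas. By Lemma \ref{w(t) is bounded along FCCF} there is a constant $R=R(w_0,\bar{K},s)>0$ with $\|w(t)\|\le R$ for all $t\in[0,T)$, and by Lemma \ref{w(t) can not in W espsilon along FCCF} there is $\epsilon=\epsilon(w_0,\bar{K})>0$ with $d(w(t),\partial\mathcal{W})\ge\epsilon$ for all $t\in[0,T)$. Therefore $w(t)$ remains in
$$\Omega:=\{w\in\overline{\mathcal{W}}\ :\ \|w\|\le R,\ d(w,\partial\mathcal{W})\ge\epsilon\}.$$
This set is an intersection of a closed ball with two closed sets, hence closed and bounded, hence compact; and since every point of $\partial\mathcal{W}$ has distance $0$ from $\partial\mathcal{W}$, the condition $d(w,\partial\mathcal{W})\ge\epsilon$ forces $w\in\overline{\mathcal{W}}\setminus\partial\mathcal{W}=\mathcal{W}$, so $\Omega\subset\mathcal{W}$. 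Thus $w(t)$ stays in a compact subset $\Omega$ of the admissible space $\mathcal{W}$, which is the first assertion.

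Finally I would invoke the escape lemma: a maximal solution of an ODE with locally Lipschitz right-hand side on the open domain $\mathcal{W}$ that is confined to a fixed compact subset $\Omega\subset\mathcal{W}$ cannot have finite maximal time, since otherwise the solution would eventually leave every compact subset of $\mathcal{W}$, contradicting $w([0,T))\subset\Omega$. Hence $T=+\infty$, giving long time existence for every $s\in\mathbb{R}$ and every $w_0\in\mathcal{W}$.

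As for the main obstacle: the genuinely substantive work is already contained in Lemma \ref{w(t) is bounded along FCCF} (properness of the strictly convex energy $\mathcal{E}$ together with its monotonicity along (\ref{FCCF}), which in turn rests on the negative definiteness of $\Delta^s$ for all $s$ from Theorem \ref{negative definiteness of Laplacian}) and in Lemma \ref{w(t) can not in W espsilon along FCCF} (the curvature blow-up estimate of Lemma \ref{proper} combined with the monotonicity of the Calabi energy $\mathcal{C}$). Given those inputs, the only points here that require care are the smoothness, hence local Lipschitz property, of the non-local vector field $\Delta^s(\bar{K}-K)$ on $\mathcal{W}$ — which is what legitimizes both local existence and the escape lemma — and the elementary topological check that the set $\Omega$ above is a compact subset of $\mathcal{W}$ itself, not merely of $\mathbb{R}^n$.
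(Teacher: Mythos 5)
Your proposal is correct and follows essentially the same route as the paper, which presents this corollary as a direct consequence of Lemma \ref{w(t) is bounded along FCCF} and Lemma \ref{w(t) can not in W espsilon along FCCF} combined with standard ODE theory (the escape lemma), exactly as in the $s=1$ case of Corollary \ref{compactness of the solution}. Your added remarks on the smoothness of $w\mapsto\Delta^s(\bar K-K)$ via the spectral calculus on positive definite matrices, and the check that $\Omega$ is compact and contained in the open set $\mathcal{W}$, simply make explicit details the paper leaves implicit.
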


The proof of Theorem \ref{main theorem on FCCF} is paralleling to that of Theorem \ref{main theorem on CCF} with the discrete Laplace operator $\Delta$ replaced by the fractional discrete Laplace operator $\Delta^s$,
which is also strictly negative definite on $\mathcal{W}$ for any $s\in \mathbb{R}$.
We omit the details of the proof here.


(Yanwen Luo) Department of Mathematics, Rutgers University, New Brunswick
  NJ, 08817

E-mail: yl1594@rutgers.edu\\[2pt]

(Xu Xu) School of Mathematics and Statistics, Wuhan University, Wuhan 430072, P.R. China

E-mail: xuxu2@whu.edu.cn\\[2pt]

\begin{thebibliography}{50}
\setlength{\itemsep}{-2pt} \small

\bibitem{Andr1} E. M. Andreev, \emph{On convex polyhedra of finite volume in Loba\u{c}evski\u{i} spaces}, {Math. USSR-Sb.} 12 (1970), 255-259.

\bibitem{Andr2} E. M. Andreev, \emph{On convex polyhedra in Loba\u{c}evski\u{i} spaces}, {Math. USSR-Sb.} 10 (1970), 412-440.

\bibitem{BPS} A. Bobenko, U. Pinkall, B. Springborn, \emph{Discrete conformal maps and ideal hyperbolic polyhedra}.  Geom. Topol. 19 (2015), no. 4, 2155-2215.

\bibitem{BS} P. L. Bowers, K. Stephenson, \emph{Uniformizing dessins and Bely\u{i} maps via circle packing}. Mem. Amer. Math. Soc. 170 (2004), no. 805.

\bibitem{CL} B. Chow, F. Luo, \emph{Combinatorial Ricci flows on surfaces}, J. Differential Geometry, 63 (2003), 97-129.

\bibitem{Ge-thesis} H. Ge, \emph{Combinatorial methods and geometric equations}, Thesis (Ph.D.)-Peking University, Beijing. 2012. (In Chinese).

\bibitem{Ge} H. Ge, \emph{Combinatorial Calabi flows on surfaces}, Trans. Amer. Math. Soc.  370  (2018),  no. 2, 1377-1391.

\bibitem{GH1} H. Ge, B. Hua, \emph{On combinatorial Calabi flow with hyperbolic circle patterns}. Adv. Math. 333 (2018), 523-538.

\bibitem{GX}  H. Ge, X. Xu, \emph{$2$-dimensional combinatorial Calabi flow in hyperbolic background geometry}. Differential Geom. Appl. 47 (2016), 86-98.

\bibitem{G3} D. Glickenstein, \emph{Discrete conformal variations and scalar curvature on piecewise flat two and three dimensional manifolds}, J. Differential Geom. 87 (2011), no. 2, 201-237.

\bibitem{GT} D. Glickenstein, J.Thomas,  \emph{Duality structures and discrete conformal variations of piecewise constant curvature surfaces}. Adv. Math. 320 (2017), 250-278.

\bibitem{GLW} Gu, David, Feng Luo, and Tianqi Wu. ``Convergence of discrete conformal geometry and computation of uniformization maps." Asian Journal of Mathematics 23.1 (2019): 21-34.

\bibitem{GGLSW} X. D. Gu, R. Guo, F. Luo, J. Sun, T. Wu, \emph{A discrete uniformization theorem for polyhedral surfaces II}, J. Differential Geom. 109 (2018), no. 3, 431-466.

\bibitem{GLSW} X. D. Gu, F. Luo, J. Sun, T. Wu, \emph{A discrete uniformization theorem for polyhedral surfaces}, J. Differential Geom. 109 (2018), no. 2, 223-256.

\bibitem{Guo} R. Guo, \emph{Combinatorial Yamabe flow on hyperbolic surfaces with boundary}. Commun. Contemp. Math. 13 (2011), no. 5, 827-842.

\bibitem{GL} R. Guo, F. Luo, \emph{Rigidity of polyhedral surfaces. II}, Geom. Topol. 13 (2009), no. 3, 1265-1312.

\bibitem{K1} P. Koebe, \emph{Kontaktprobleme der konformen Abbildung}. Ber. S\"{a}chs. Akad. Wiss. Leipzig, Math.- Phys. Kl. 88 (1936), 141-164.

\bibitem{L1} F. Luo, \emph{Combinatorial Yamabe flow on surfaces}, Commun. Contemp. Math. 6 (2004), no. 5, 765-780.


\bibitem{L3} F. Luo, \emph{Rigidity of polyhedral surfaces, III}, Geom. Topol. 15 (2011), 2299-2319.


\bibitem{LSW} Luo, Feng, Jian Sun, and Tianqi Wu. ``Discrete conformal geometry of polyhedral surfaces and its convergence." arXiv preprint arXiv:2009.12706 (2020).





\bibitem{P} L.S. Pontryagin, \emph{Ordinary differential equations}, Addison-Wesley Publishing Company Inc., Reading, 1962.

\bibitem{Rat} John G. Ratcliffe, \emph{Foundations of hyperbolic manifolds}. Second edition. Graduate Texts in Mathematics, 149. Springer, New York, 2006. xii+779 pp. ISBN: 978-0387-33197-3; 0-387-33197-2.

\bibitem{RS} B. Rodin, D. Sullivan. \emph{The convergence of circle packings to the Riemann mapping}. Journal of Differential Geometry 26.2 (1987): 349-360.

\bibitem{Sp} B. Springborn, \emph{Ideal hyperbolic polyhedra and discrete uniformization}. Discrete Comput. Geom. 64 (2020), no. 1, 63-108.

\bibitem{T1} W. Thurston, \emph{Geometry and topology of $3$-manifolds}, Princeton Lecture Notes 1976, \href{http://www.msri.org/publications/books/gt3m}{http://www.msri.org/publications/books/gt3m}.


\bibitem{SWGL} Sun, Jian, Tianqi Wu, Xianfeng Gu, and Feng Luo. ``Discrete conformal deformation: algorithm and experiments." SIAM Journal on Imaging Sciences 8, no. 3 (2015): 1421-1456.

\bibitem{WZ}  Wu, Tianqi, and Xiaoping Zhu. ``The Convergence of Discrete Uniformizations for Closed Surfaces." arXiv preprint arXiv:2008.06744 (2020).

\bibitem{WGS} Wu, Tianqi, Xianfeng Gu, and Jian Sun. ``Rigidity of infinite hexagonal triangulation of the plane." Transactions of the American Mathematical Society 367.9 (2015): 6539-6555.

\bibitem{W} Wu, Tianqi. Finiteness of switches in discrete Yamabe flow. Diss. Master Thesis, Tsinghua University, Beijing, 2014.


\bibitem{WX} T. Wu, X. Xu, \emph{Fractional combinatorial Calabi flow on surfaces}, \href{https://arxiv.org/abs/2107.14102}{	arXiv:2107.14102 [math.GT].}

\bibitem{X1} X. Xu, \emph{Rigidity of inversive distance circle packings revisited}, Adv. Math. 332 (2018), 476-509.

\bibitem{X} X. Xu, \emph{Combinatorial Calabi flow on $3$-manifolds with toroidal boundary}, J. Funct. Anal. 280 (2021), no. 11, 108990.

\bibitem{X2} X. Xu, \emph{Rigidity and deformation of discrete conformal structures on polyhedral surfaces},
\href{https://arxiv.org/abs/2103.05272} { arXiv:2103.05272 [math.DG]}.

\bibitem{X3} X. Xu, \emph{Combinatorial Yamabe flow on bordered surfaces}, preprint, 12 pages, 2021.


 \bibitem{ZGZLYG} M. Zhang, R. Guo, W. Zeng, F. Luo, S.T. Yau, X. Gu, \emph{The unified discrete surface Ricci flow}, Graphical Models 76 (2014),  321-339.

\bibitem{ZG} W. Zeng,  X. Gu, \emph{Ricci flow for shape analysis and surface registration: theories, algorithms and applications}. Springer Science \& Business Media, 2013.

\bibitem{ZX} X. Zhu, X. Xu, \emph{Combinatorial Calabi flow with surgery on surfaces}, Calc. Var. Partial Differential Equations 58 (2019), no. 6, Paper No. 195, 20 pp.

\end{thebibliography}
\end{document}